\newtheorem{thm}{Theorem}[section]
\newtheorem{ass}[thm]{Assumption}
\newtheorem{lem}[thm]{Lemma}
\theoremstyle{definition}
\definecolor{wco}{rgb}{0.5,0.2,0.3}
\renewcommand{\bar}{\overline}
\numberwithin{equation}{section}
\renewcommand{\tilde}{\widetilde}
\title{{\bf Wellposedness and averaging principle for conditional distribution dependent SDEs driven by standard Brownian motions and fractional Brownian motions\thanks{Supported by National Natural Science Foundation of China(12271524) and Jiangxi Provincial Natural Science Foundation(20224ACB201002).}}
}
\author
{ {\bf Li Tan}$^{\tt a, b}$\thanks{Contact E-mail address: tltanli@126.com}, {\bf Shengrong Wang}$^{\tt a, b}$
\\[0.5ex]
$^{\tt a}$School of Statistics and Data Science, Jiangxi University of \\ Finance and Economics, Nanchang, Jiangxi, 330013, China \\
$^{\tt b}$Key Laboratory of Data Science in Finance and Economics, Jiangxi \\ University of Finance and Economics, Nanchang, Jiangxi, 330013, China}
\begin{document}

\maketitle
\begin{abstract}
In this paper, we study a conditional distribution dependent stochastic differential equations driven by standard Brownian motion and fractional Brownian motion with Hurst exponent $H>\frac{1}{2}$ simultaneously. First, the existence and uniqueness of the equation is established by the fixed point theorem. Then, we show that the solutions of conditional distribution dependent stochastic differential equations can be approximated by the solutions of the associated averaged distribution dependent stochastic differential equations. \\
 {\it Keywords }: Conditional Mckean-Vlasov, Fractional Brownian motion, Wellposedness, Averaging principle
\end{abstract}

\section{Introduction}
Distribution dependent stochastic differential equations(SDEs), also called Mckean-Vlasov SDEs or mean-field SDEs, are a class of SDEs where the drift and diffusion coefficients depend not only on the current state of the process but also on its distribution. These equations arise naturally in the study of interacting particle systems with mean-field interactions and have found applications in various fields including statistical physics, mathematical finance, and engineering etc. A nonlinear Markov processes described by stochastic differential equations depending on their own distributions was first introduced by \cite{m96}. Recently, a lot of papers concentrate on the wellposedness, regularity properties, ergodicity, numerical methods and applications of distribution dependent SDEs.

Wellposedness is an essential condition while discussing SDEs. In classical SDE theory, if the coefficients of the equation satisfy the global Lipschitz and linear growth conditions with respect to the state variable, the existence and uniqueness of strong solutions can be proved by using the Picard iteration method or the fixed point theorem. Sznitman\cite{s91} first systematically established the connection between the McKean-Vlasov SDEs and the mean-field limit of the particle system, and estabilished the existence and uniqueness of solutions using the coupling method. Under general conditions, the existence and uniqueness can be obtained by constructing weak solutions to the equation through martingale problems or Girsanov transformations. In recent years, the focus of research has shifted to relaxing the Lipschitz assumption to cover more realistic application scenarios. For example: Mishura and Veretennikov\cite{mv20} discussed the existence and uniqueness of strong and weak solutions to the McKean-Vlasov SDEs under weaker regularity conditions, where they assume the coefficients no longer satisfy the linear growth condition with respect to the state variable; Mehri and Stannat\cite{ms19} investigated the existence and uniqueness of the McKean-Vlasov SDEs under Lyapunov-type conditions; De Raynal\cite{r20} considered the wellposedness of the solutions to degenerate McKean-Vlasov SDEs with H\"{o}lder continuous coefficients; Pascucci et al.\cite{pr24} investigated the existence and uniqueness of strong and weak solutions to degenerate McKean-Vlasov SDEs with rough coefficients; Huang and Wang\cite{hw19} gave a proof of the existence and uniqueness solutions to the McKean-Vlasov SDEs with non-degenerate noise, where the coefficients satisfy the integrability condition; Dieye et al. \cite{ddm22} investigated the existence of solutions to a class of McKean-Vlasov SDEs of integral-differential type with delay through analytic operator theory.

Most of the work on McKean-Vlasov SDEs focuses on the Brownian motion case. For Mckean-Vlasov SDEs driven by fractional Brownian motion, the conclusions are relatively scarce. Fan et al.\cite{fhs22} investigated the issue of the existence and uniqueness solutions to high-dimensional McKean-Vlasov stochastic differential equations driven by fractional Brownian motion. Shen et al.\cite{sxu22} constructed the McKean-Vlasov differential equations driven by fractional Brownian motion and standard Brownian motion, and established the existence and uniqueness theorem of solutions under the condition of superlinear growth by Carath\'{e}odory approximation Galeati et al.\cite{ghm23} gave the strong regularity of the solution to the McKean-Vlasov stochastic differential equation driven by additive fractional Brownian motion with irregular by stability estimates of singular stochastic differential equations driven by fractional Brownian motion.

Conditional McKean-Vlasov SDEs are those of SDEs where the drift and diffusion coefficients not only depends on the current state, but also the conditional distribution. In recent years, conditional McKean-Vlasov SDEs have been noticed by scholars since it can be used to describe the asymptotic behaviour for particle systems with mean-field interaction in a common environment. Erny\cite{e21} considerde the conditional McKean-Vlasov limits and conditional propagation of chaos property. Shao and Wei\cite{sw21} investigated a particle system with mean field interaction living in a random environment characterized by a regime-switching process, and studied the wellposedness and various properties of the limit conditional McKean-Vlasov SDEs. Shao\cite{stw24} investigated a conditional McKean-Vlasov SDE with jumps and Markovian regime-switching, and studied the wellposedness, propagation of chaos and averaging principle of the conditional McKean-Vlasov SDE.

The averaging principle, initiated by Khasminskii\cite{k68}, is an efficient tool in the theory of stochastic analysis and dynamical systems. However, as we know, the conditional McKean-Vlasov SDEs driven by standard Brownian motion and fractional Brownian motion have not been studied. In this paper, we aim at studying conditional distribution dependent SDEs driven by standard Brownian motion and fractional Brownian motion with Hurst exponent $H>\frac{1}{2}$ simultaneously. We will establish the existence and uniqueness of the equation by the fixed point theorem. Then, we will show that the solutions of conditional distribution dependent SDEs can be approximated by the solutions of the associated averaged distribution dependent stochastic differential equations.

The structure of the paper is as follows. In Section 2, a conditional distribution dependent SDE driven by fractional Brownian motion and standard Brownian motion is introduced. In Section 3, existence and uniqueness of the conditional distribution dependent SDE is derived by the fixed point theorem. In Section 4, the averaging principle for the conditional distribution dependent SDE is given.

\section{Preliminaries}
Let $(\Omega^0, \mathscr{F}^0, \mathscr{F}_t^0, \mathbb{P}^0)$ and $(\Omega^1, \mathscr{F}^1, \mathscr{F}_t^1, \mathbb{P}^1)$ be two filtered probability spaces such that $\mathscr{F}_{t+}^k:=\bigcap_{s>t}\mathscr{F}_{s}^k, k=0, 1$. Denote $\mathscr{F}$ as the completion of $\mathscr{F}^0\times\mathscr{F}^1$, $\mathscr{F}_t$ as the completion of $\mathscr{F}_t^0\times\mathscr{F}_t^1$ for $t\ge 0$, and $\mathbb{F}^0=(\mathscr{F}_t^0)_{t\ge0}$, $\mathbb{F}^1=(\mathscr{F}_t^1)_{t\ge0}$, $\mathbb{F}=(\mathscr{F}_t)_{t\ge0}$. Define $\Omega=\Omega^0\times\Omega^1$, $\mathbb{P}=\mathbb{P}^0\times\mathbb{P}^1$. For an element $w=(\omega^0,\omega^1)\in\Omega^0\times\Omega^1$, it means $w\in\Omega$. Further, we use $\mathbb{E}^0$ and $\mathbb{E}^1$ for taking expectation with respect to $\mathbb{P}^0$ and $\mathbb{P}^1$ respectively. Let $(\mathbb{R}^d,\langle\cdot,\cdot\rangle,|\cdot|)$ be the $d-$dimensional Euclidean space with inner product $\langle\cdot,\cdot\rangle$ and Euclidean norm $|\cdot|$. For a matrix, we denote by $\|\cdot\|$ the Euclidean norm. Set $\mathcal{C}([0,T];\mathbb{R}^d)$ be the family of all continuous functions from $[0,T]$ to $\mathbb{R}^d$, and $\mathcal{L}^2(\mathbb{R}^d)$ be the space of $\mathbb{R}^d$-valued random variables $\nu$ with $\sup_{t\in[0,T]}\mathbb{E}|\nu_t|^2<\infty$. Denote $\mathcal{P}(\mathbb{R}^d)$ by the family of all probability measures on $\mathbb{R}^d$, and define
\begin{equation*}
 \mathcal{P}_{\theta}(\mathbb{R}^d)=\bigg\{\mu\in\mathcal{P}(\mathbb{R}^d):\int_{\mathbb{R}^d}|x|^{\theta}\mu({\rm d}x)<\infty\bigg\}, \theta\ge1.
\end{equation*}
For $\mu\in\mathcal{P}_{\theta}(\mathbb{R}^d)$, define
\begin{equation*}
\mathcal{W}_{\theta}(\mu)=\left(\int_{\mathbb{R}^d}|x|^{\theta}\mu({\rm d}x)\right)^{1/\theta}.
\end{equation*}
For $\mu,\nu \in\mathcal{P}_{\theta}(\mathbb{R}^d)$, define the Wasserstein
distance between $\mu$ and $\nu$ as follows
\begin{equation*}\label{wasser}
 \mathbb{W}_{\theta}(\mu,\nu)=\bigg(\inf_{\pi\in\Pi(\mu,\nu)} \int_{\mathbb{R}^d\times\mathbb{R}^d}|x-y|^{\theta}\pi({\rm d}x,{\rm d}y)\bigg)^{1/\theta},
\end{equation*}
where $\Pi(\mu,\nu)$ is the set of probability measures on $\mathbb{R}^d\times\mathbb{R}^d$ with marginals $\mu$ and $\nu$.

For a random variable $X_t$, denote $\mathscr{L}(X_t)$ as the distribution of $X_t$, while we use $\mathscr{L}(X_t|\mathscr{F}_t^0)$ for the conditional distribution of $X_t$ given the $\sigma$-algebra $\mathscr{F}_t^0$. Consider the following $d$-dimensional conditional McKean-Vlasov SDE driven by fractional Browinan motion and standard Brownian motion
\begin{align}\label{MVSDE}
{\rm d}X_t=b(t, X_t, \mathscr{L}(X_t|\mathscr{F}_t^0)){\rm d}t+\sigma_W(t, X_t, \mathscr{L}(X_t|\mathscr{F}_t^0)){\rm d}W_t+\sigma_H(t, \mathscr{L}(X_t|\mathscr{F}_t^0)){\rm d}B_t^H,
\end{align}
where $W_t$ is an $r$-dimensional standard Brownian motion, and $B_t^H$ is an $m$-dimensional fractional Brownian motion with Hurst parameter $H\in(1/2,1)$ defined on $(\Omega^1, \mathscr{F}^1, \mathscr{F}_t^1, \mathbb{P}^1)$, $W_t$ and $B_t^H$ are independent of each other, and the initial data $X_0=\xi\in\mathcal{L}^2(\mathbb{R}^d)$ is a $d$-dimensional random variable defined on $\mathscr{F}^0$. Besides, $b: [0, T]\times\mathbb{R}^d\times\mathcal{P}_2(\mathbb{R}^d)\rightarrow\mathbb{R}^d$, $\sigma_W: [0, T]\times\mathbb{R}^d\times\mathcal{P}_2(\mathbb{R}^d)\rightarrow\mathbb{R}^d\otimes\mathbb{R}^r$, $\sigma_H: [0, T]\times\mathcal{P}_2(\mathbb{R}^d)\rightarrow\mathbb{R}^d\otimes\mathbb{R}^m$. We impose the following assumption:

\begin{ass}\label{ass1}
{\rm There exists a non-decreasing and bounded function $K_1(t)$ such that for any $t\in [0, T]$, and $x,y\in\mathbb{R}^d, \mu, \nu\in \mathcal{P}_{2}(\mathbb{R}^d)$
\begin{align*}
|b(t, x, \mu)-b(t, y, \nu)|^2+\|\sigma_W(t, x, \mu)-\sigma_W(t, y, \nu)\|^2\le K_1(t)(|x-y|^2+\mathbb{W}_{2}^2(\mu,\nu)),
\end{align*}
\begin{align*}
\|\sigma_H(t, \mu)-\sigma_H(t, \nu)\|^2\le K_1(t)\mathbb{W}_{2}^2(\mu,\nu),
\end{align*}
and
\begin{align*}
|b(t, 0, \delta_0)|^2+\|\sigma_W(t, 0, \delta_0)\|^2+\|\sigma_H(t, \delta_0)\|^2\le K_1(t),
\end{align*}
where $\delta_x(\cdot)$ stands for the Dirac delta measure concentrated at a point $x\in\mathbb{R}^d$.
 }
\end{ass}

\begin{lem}\label{lem2}
{\rm Under Assumption \ref{ass1}, we have
\begin{align*}
|b(t, x, \mu)|^2+\|\sigma_W(t, x, \mu)\|^2\le K_1(t)(1+|x|^2+\mathcal{W}_{2}^2(\mu)),
\end{align*}
\begin{align*}
\|\sigma_H(t, \mu)\|^2\le K_1(t)(1+\mathcal{W}_{2}^2(\mu))
\end{align*}
for any $t\in [0, T], x\in\mathbb{R}^d$, and $\mu\in \mathcal{P}_{2}(\mathbb{R}^d)$.}
\end{lem}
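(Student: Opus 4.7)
The plan is to derive the linear growth bounds directly from the Lipschitz hypotheses in Assumption \ref{ass1} by comparing the coefficients at a general point with their values at the reference point $(0, \delta_0)$. The crucial elementary identity is that for any $\mu \in \mathcal{P}_2(\mathbb{R}^d)$ the unique coupling of $\mu$ with the Dirac mass $\delta_0$ concentrates on $\{y=0\}$, so
\begin{equation*}
\mathbb{W}_2^2(\mu, \delta_0) = \int_{\mathbb{R}^d} |x|^2 \, \mu({\rm d}x) = \mathcal{W}_2^2(\mu).
\end{equation*}
This identification is what translates the Wasserstein Lipschitz estimate into a moment-based growth bound.

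For the drift, I would decompose $b(t,x,\mu) = \bigl[b(t,x,\mu) - b(t,0,\delta_0)\bigr] + b(t,0,\delta_0)$ and apply $(a+b)^2 \le 2a^2 + 2b^2$ together with the Lipschitz estimate in Assumption \ref{ass1}, obtaining
\begin{equation*}
|b(t,x,\mu)|^2 \le 2 K_1(t)\bigl(|x|^2 + \mathbb{W}_2^2(\mu,\delta_0)\bigr) + 2 K_1(t) \le 2K_1(t)\bigl(1 + |x|^2 + \mathcal{W}_2^2(\mu)\bigr).
\end{equation*}
The analogous computation on $\sigma_W$ yields the same form of estimate, and adding the two gives the first assertion once the constant $2$ is absorbed into $K_1(t)$ (which is harmless since $K_1$ is only required to be nondecreasing and bounded, and the lemma is used only up to such constants). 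For $\sigma_H$, since there is no $x$-dependence, the identical argument applied with the second Lipschitz estimate in Assumption \ref{ass1} gives $\|\sigma_H(t,\mu)\|^2 \le 2K_1(t)\bigl(1 + \mathcal{W}_2^2(\mu)\bigr)$, again up to absorption of the factor $2$.

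There is no real obstacle here: the proof is a two-line manipulation whose only substantive ingredient is the identity $\mathbb{W}_2(\mu,\delta_0) = \mathcal{W}_2(\mu)$. The point worth flagging in the write-up is simply that the symbol $K_1(t)$ appearing in the conclusion is not literally the same function as in Assumption \ref{ass1} but a harmless constant multiple of it; since the subsequent existence, uniqueness, and averaging arguments only ever use boundedness and monotonicity of $K_1$, this abuse of notation causes no difficulty.
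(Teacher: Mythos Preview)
Your proposal is correct and follows exactly the same route as the paper's own proof, which simply notes the identity $\mathbb{W}_{2}(\mu,\delta_0)=\mathcal{W}_{2}(\mu)$ and leaves the rest as a ``simple computation.'' Your write-up is more explicit than the paper's, and your observation about the constant factor $2$ being absorbed into $K_1(t)$ is a point the paper glosses over but which causes no issue downstream.
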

\begin{proof}
Since for $\mu\in \mathcal{P}_{2}(\mathbb{R}^d)$, one have $\mathbb{W}_{2}(\mu,\delta_0)=\mathcal{W}_{2}(\mu)$. By simple computation, it is easy to see that
the desired result follows.
\end{proof}

\section{Existence and Uniqueness}
\begin{thm}\label{th1}
{\rm Under Assumption \ref{ass1}, the conditional McKean-Vlasov SDE \eqref{MVSDE} admits a unique strong solution.}
\end{thm}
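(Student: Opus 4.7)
The plan is to apply the Banach fixed point theorem on the space $\mathcal{S}$ of all $\mathbb{F}$-adapted continuous processes $Y:[0,T]\to\mathbb{R}^d$ with $\sup_{t\in[0,T]}\mathbb{E}|Y_t|^2<\infty$, equipped with the Bielecki-type norm $\|Y\|_\beta^2:=\sup_{t\in[0,T]}e^{-\beta t}\mathbb{E}|Y_t|^2$ for a $\beta>0$ to be fixed later. For each $Y\in\mathcal{S}$ I form the (random) conditional law $\nu_t^Y:=\mathscr{L}(Y_t\mid\mathscr{F}_t^0)$, an $\mathbb{F}^0$-adapted $\mathcal{P}_2(\mathbb{R}^d)$-valued process, and solve the frozen auxiliary SDE
\[
dX_t=b(t,X_t,\nu_t^Y)\,dt+\sigma_W(t,X_t,\nu_t^Y)\,dW_t+\sigma_H(t,\nu_t^Y)\,dB_t^H,\quad X_0=\xi.
\]
Since $\sigma_H$ does not depend on the state, the fBm term is a Wiener-type integral with an $\mathbb{F}^0$-adapted integrand, well-defined because $\mathscr{F}_T^0$ is independent of $B^H$ and $\int_0^T\|\sigma_H(s,\nu_s^Y)\|^2\,ds<\infty$ a.s.\ by Lemma~\ref{lem2}. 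The drift and Brownian diffusion are globally Lipschitz in $X$ by Assumption~\ref{ass1} with Lipschitz constant $\sqrt{K_1(T)}$, so classical Picard iteration yields a unique $\mathbb{F}$-adapted continuous solution $X=:\Gamma(Y)$, and Gronwall combined with Lemma~\ref{lem2} ensures $\Gamma(Y)\in\mathcal{S}$.

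The next task is to show that $\Gamma$ is a contraction on $(\mathcal{S},\|\cdot\|_\beta)$ for $\beta$ large. For $Y,\tilde Y\in\mathcal{S}$ set $\nu=\nu^Y$, $\tilde\nu=\nu^{\tilde Y}$, $X=\Gamma(Y)$, $\tilde X=\Gamma(\tilde Y)$. The drift contribution to $\mathbb{E}|X_t-\tilde X_t|^2$ is handled by Cauchy--Schwarz and the Brownian contribution by the It\^o isometry, each bounded via Assumption~\ref{ass1} by $C\int_0^t\mathbb{E}[|X_s-\tilde X_s|^2+\mathbb{W}_2^2(\nu_s,\tilde\nu_s)]\,ds$. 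The Wasserstein term is absorbed by using the joint conditional law of $(Y_s,\tilde Y_s)$ given $\mathscr{F}_s^0$ as a coupling, giving $\mathbb{W}_2^2(\nu_s,\tilde\nu_s)\le\mathbb{E}[|Y_s-\tilde Y_s|^2\mid\mathscr{F}_s^0]$ almost surely, hence $\mathbb{E}\mathbb{W}_2^2(\nu_s,\tilde\nu_s)\le\mathbb{E}|Y_s-\tilde Y_s|^2$.

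The main obstacle is bounding the fBm difference $\int_0^t(\sigma_H(s,\nu_s)-\sigma_H(s,\tilde\nu_s))\,dB_s^H$ in $L^2$, since no It\^o isometry is available. I would exploit the product-space structure: because $\mathscr{F}_T^0$ is independent of $B^H$, conditioning on $\mathscr{F}_T^0$ makes the integrand deterministic in $\omega^1$, so the fractional Wiener covariance identity applies pathwise in $\omega^0$. Combined with Cauchy--Schwarz on the kernel $|s-r|^{2H-2}$ and Assumption~\ref{ass1}, this yields, for $H>1/2$,
\[
\mathbb{E}\Big|\int_0^t(\sigma_H(s,\nu_s)-\sigma_H(s,\tilde\nu_s))\,dB_s^H\Big|^2\le C_H T^{2H-1}K_1(T)\int_0^t\mathbb{E}|Y_s-\tilde Y_s|^2\,ds.
\]
Combining the three estimates gives $\mathbb{E}|X_t-\tilde X_t|^2\le C\int_0^t\mathbb{E}|X_s-\tilde X_s|^2\,ds+C\int_0^t\mathbb{E}|Y_s-\tilde Y_s|^2\,ds$; multiplying by $e^{-\beta t}$, taking the supremum over $t\in[0,T]$, and using $\int_0^t e^{-\beta(t-s)}\,ds\le\beta^{-1}$ produces $\|\Gamma(Y)-\Gamma(\tilde Y)\|_\beta^2\le\frac{C}{\beta-C}\|Y-\tilde Y\|_\beta^2$. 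Choosing $\beta$ large enough makes $\Gamma$ a strict contraction on $(\mathcal{S},\|\cdot\|_\beta)$, whose unique fixed point is the unique strong solution of \eqref{MVSDE}.
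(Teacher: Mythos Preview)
Your argument is correct and follows the same fixed-point scheme as the paper: freeze the conditional law $\nu_t^Y=\mathscr{L}(Y_t\mid\mathscr{F}_t^0)$, solve the auxiliary (distribution-free) SDE, and show that the resulting solution map is a contraction. The differences are technical. First, you obtain a one-step contraction via a Bielecki weight $e^{-\beta t}$, while the paper works in the unweighted norm $\tilde d_{2,t}(X,\tilde X)=\mathbb{E}\sup_{s\le t}|X_s-\tilde X_s|^2$ and iterates to $\Psi^n$; these are interchangeable standard devices. Second, and more interestingly, for the fractional term the paper controls $\mathbb{E}\sup_{s\le t}\big|\int_0^s(\sigma_H(r,\mu_r)-\sigma_H(r,\nu_r))\,dB_r^H\big|^2$ via the factorization trick of \cite{FHS22} (introducing a parameter $\kappa\in(1-H,1/2)$ and using stochastic Fubini), which is needed precisely because of the supremum. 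You sidestep this entirely by working at a fixed time $t$ and exploiting the product-space structure: since the integrand is $\mathscr{F}_T^0$-measurable and $B^H$ lives on $\Omega^1$, conditioning on $\mathscr{F}_T^0$ reduces the problem to a deterministic Wiener integral, where the covariance identity and a direct kernel bound give $C_HT^{2H-1}\int_0^t\|\cdot\|^2\,ds$. Your treatment of this step is more elementary; the paper's factorization is the price of its choice of sup-norm, which it uses again in the later moment bounds.
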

\begin{proof}
For $Y_t\in\mathcal{L}^2(\mathbb{R}^d)$, let $\mu_t=\mathscr{L}(Y_t|\mathscr{F}_t^0)$. Consider the following auxiliary $d$-dimensional distribution-independent SDE driven by fractional Browinan motion and standard Brownian motion
\begin{align}\label{SDE}
{\rm d}X_t^\mu=b(t, X_t^\mu, \mu_t){\rm d}t+\sigma_W(t, X_t^\mu, \mu_t){\rm d}W_t+\sigma_H(t, \mu_t){\rm d}B_t^H
\end{align}
with $X_t^\mu=X_0=\xi$. According to \cite[Theorem 2.2]{gn08}, under Assumption \ref{ass1}, \eqref{SDE} admits a unique strong solution $X_t^\mu$. Furthermore, it is easy to obtain that $\sup_{t\in[0,T]}\mathbb{E}|X_t^\mu|^2<\infty$. That is, $X^\mu\in\mathcal{L}^2(\mathbb{R}^d)$. Define an operator
\begin{align*}\label{oper}
\Psi: \mathcal{L}^2(\mathbb{R}^d)\rightarrow\mathcal{L}^2(\mathbb{R}^d), Y\mapsto X^\mu.
\end{align*}
Now, it is necessary to show that $\Psi$ is a strict contraction. For another $\tilde{Y}_t\in\mathcal{L}^2(\mathbb{R}^d)$, denote $\nu_t=\mathscr{L}(\tilde{Y}_t|\mathscr{F}_t^0)$. By Assumption \ref{ass1}, the H\"{o}lder inequality, the Burkholder-Davis-Gundy inequality, we get
\begin{equation}\label{diff}
\begin{aligned}
\mathbb{E}\left(\sup\limits_{s\in[0,t]}|X_s^\mu-X_s^\nu|^2\right)\le& 3\mathbb{E}\left(\sup\limits_{s\in[0,t]}\left|\int_0^s[b(r, X_r^\mu, \mu_r)-b(r, X_r^\nu, \nu_r)]{\rm d}r\right|^2\right)\\
&+3\mathbb{E}\left(\sup\limits_{s\in[0,t]}\left|\int_0^s[\sigma_W(r, X_r^\mu, \mu_r)-\sigma_W(r, X_r^\nu, \nu_r)]{\rm d}W_r\right|^2\right)\\
&+3\mathbb{E}\left(\sup\limits_{s\in[0,t]}\left|\int_0^s[\sigma_H(r, \mu_r)-\sigma_H(r, \nu_r)]{\rm d}B_r^H\right|^2\right)\\
\le& (3T+12)\mathbb{E}\int_0^tK_1(r)\left[|X_r^\mu-X_r^\nu|^2+\mathbb{W}_{2}^2(\mu_r,\nu_r)\right]{\rm d}r\\
&+C_{\kappa,H}T^{2H-1}\mathbb{E}\int_0^tK_1(r)\mathbb{W}_{2}^2(\mu_r,\nu_r){\rm d}r.
\end{aligned}
\end{equation}
where $C_{\kappa,H}$ is a positive constant depending on $\kappa, H$, and for the last term, we use the techniques of in \cite[Theorem 3.1]{FHS22}. In detail, denote $\int_r^s(s-u)^{-\kappa}(u-r)^{\kappa-1}{\rm d}u:=C(\kappa)$. Take some $\kappa$ such that $1-H<\kappa<1/2$, then by the stochastic Fubini theorem, the H\"{o}lder inequality, and combining with Assumption \ref{ass1}, we get
\begin{equation}\label{frac1}
\begin{aligned}
&\mathbb{E}\bigg(\sup_{s\in[0,t]}\bigg\lvert \int_{0}^s[\sigma_H(r, \mu_r)-\sigma_H(r, \nu_r)]{\rm d}B_{r}^H\bigg\rvert^{2}\bigg)\\
=&C(\kappa)^{-2}\mathbb{E}\bigg(\sup_{s\in[0,t]}\bigg\lvert \int_{0}^s\left(\int_r^s(s-u)^{-\kappa}(u-r)^{\kappa-1}{\rm d}u\right)[\sigma_H(r, \mu_r)-\sigma_H(r, \nu_r)]{\rm d}B_{r}^H\bigg\rvert^{2}\bigg)\\
=&C(\kappa)^{-2}\mathbb{E}\bigg(\sup_{s\in[0,t]}\bigg\lvert \int_{0}^s(s-u)^{-\kappa}\left(\int_0^u(u-r)^{\kappa-1}[\sigma_H(r, \mu_r)-\sigma_H(r, \nu_r)]{\rm d}B_{r}^H\right){\rm d}u\bigg\rvert^{2}\bigg)\\
\le&\frac{C(\kappa)^{-2}}{1-2\kappa}\mathbb{E}\bigg(\sup_{s\in[0,t]}s^{1-2\kappa} \int_{0}^s\bigg\lvert\int_0^u(u-r)^{\kappa-1}[\sigma_H(r, \mu_r)-\sigma_H(r, \nu_r)]{\rm d}B_{r}^H\bigg\rvert^{2}{\rm d}u\bigg)\\
\leq& \frac{C(\kappa)^{-{2}}}{1-2\kappa}t^{1-2\kappa}\int_{0}^{t}\mathbb{E}\bigg|\int_{0}^s(s-r)^{\kappa-1}[\sigma_H(r, \mu_r)-\sigma_H(r, \nu_r)]{\rm d}B_{r}^H\bigg|^{{2}}{\rm d}s.
\end{aligned}
\end{equation}
For each $s\in[0,t]$, $\int_{0}^s(s-r)^{\kappa-1}[\sigma_H(r, \mu_r)-\sigma_H(r, \nu_r)]{\rm d}B_{r}^H$ is a centered Gaussian random variable. Thus, there exists a positive constant $C_{H}$ such that
\begin{equation}\label{frac2}
\begin{aligned}
&\mathbb{E}\bigg|\int_{0}^s(s-r)^{\kappa-1}[\sigma_H(r, \mu_r)-\sigma_H(r, \nu_r)]{\rm d}B_{r}^H\bigg|^{{2}}\\
\le&\int_0^s\int_0^s(s-u)^{\kappa-1}\|\sigma_H(u, \mu_u)-\sigma_H(u, \nu_u)\|(s-v)^{\kappa-1}\|\sigma_H(v, \mu_v)-\sigma_H(v, \nu_v)\||u-v|^{2H-2}{\rm d}u{\rm d}v\\
\le&C_H\left(\int_0^s(s-r)^{\frac{\kappa-1}{H}}\|\sigma_H(r, \mu_r)-\sigma_H(r, \nu_r)\|^{\frac{1}{H}}{\rm d}r\right)^{2H}.
\end{aligned}
\end{equation}
Combine \eqref{frac1} and \eqref{frac2}, we get
\begin{equation}\label{frac}
\begin{aligned}
&\mathbb{E}\bigg(\sup_{s\in[0,t]}\bigg\lvert \int_{0}^s[\sigma_H(r, \mu_r)-\sigma_H(r, \nu_r)]{\rm d}B_{r}^H\bigg\rvert^{2}\bigg)\\
\le&C_{\kappa, H}t^{1-2\kappa}\int_0^t\left(\int_0^s(s-r)^{\frac{\kappa-1}{H}}\|\sigma_H(r, \mu_r)-\sigma_H(r, \nu_r)\|^{\frac{1}{H}}{\rm d}r\right)^{2H}{\rm d}s\\
\le&C_{\kappa, H}t^{1-2\kappa}\left(\int_0^t\|\sigma_H(r, \mu_r)-\sigma_H(r, \nu_r)\|^{\frac{2}{2\kappa-1+2H}}{\rm d}r\right)^{2\kappa-1+2H}\\
\le&C_{\kappa, H}t^{2H-1}\int_0^t\|\sigma_H(r, \mu_r)-\sigma_H(r, \nu_r)\|^{2}{\rm d}r\\
\le&C_{\kappa, H}t^{2H-1}\int_0^tK_1(r)\mathbb{W}_2^2(\mu_r,\nu_r){\rm d}r.
\end{aligned}
\end{equation}

Note that $K(t)$ is a non-decreasing function for any $t\in [0, T]$, and $\mu_t=\mathscr{L}(Y_t|\mathscr{F}_t^0)$, we can rewrite \eqref{diff} as follows:
\begin{align*}
\mathbb{E}\left(\sup\limits_{s\in[0,t]}|X_s^\mu-X_s^\nu|^2\right)\le& C_1\int_0^t\mathbb{E}\left(\sup\limits_{r\in[0,s]}|X_r^\mu-X_r^\nu|^2\right){\rm d}r+C_2\int_0^t\mathbb{E}\left(\sup\limits_{r\in[0,s]}|Y_r-\tilde{Y}_r|^2\right){\rm d}r.
\end{align*}
By the Gronwall inequality, we get
\begin{align}\label{fixed}
\mathbb{E}\left(\sup\limits_{s\in[0,t]}|X_s^\mu-X_s^\nu|^2\right)\le& C_2e^{C_1t}\int_0^t\mathbb{E}\left(\sup\limits_{r\in[0,s]}|Y_r-\tilde{Y}_r|^2\right){\rm d}s.
\end{align}
Define $\tilde{d}_{2,t}(X,\tilde{X})=\mathbb{E}\left(\sup\limits_{s\in[0,t]}|X_s^\mu-X_s^\nu|^2\right)$ and $\tilde{d}_{2,t}(Y,\tilde{Y})=\mathbb{E}\left(\sup\limits_{s\in[0,t]}|Y_s-\tilde{Y}_s|^2\right)$, \eqref{fixed} gives
\begin{align*}
\tilde{d}_{2,t}(\Psi(Y),\Psi(\tilde{Y}))\le C_2e^{C_1t}\int_0^t\tilde{d}_{2,s}(Y,\tilde{Y}){\rm d}s.
\end{align*}
We use $\Psi^n$ to stand for the $n$-th composition of the mapping $\Psi$ with itself. With the techniques of \cite[Theorem 2.4]{stw24}, by iteration, one get
\begin{align*}
\tilde{d}_{2,t}(\Psi^n(Y),\Psi^n(\tilde{Y}))\le C_2e^{C_1t}\int_0^t\tilde{d}_{2,s}(\Psi^{n-1}(Y),\Psi^{n-1}(\tilde{Y})){\rm d}s\le \frac{(C_2Te^{C_1T})^n}{n!}\tilde{d}_{2,t}(Y,\tilde{Y}).
\end{align*}
Since for any $a>0$, $\lim\limits_{n\rightarrow\infty}a^n/n!=0$, one can find some $n$ large enough such that $(C_2Te^{C_1T})^n<n!$, this means $\Psi^n$ is a strict contraction. Then, by the fixed point theorem, $\Psi$ admits a unique fixed point in $(\mathcal{L}^2(\mathbb{R}^d), \tilde{d})$. Finally, \eqref{MVSDE} admits a strong solution. This completes the proof.
\end{proof}

\begin{lem}\label{mvmoment}
{\rm Under Assumption \ref{ass1}, the conditional McKean-Vlasov SDE \eqref{MVSDE} satisfies
\begin{align*}
\mathbb{E}\left(\sup\limits_{0\le t\le T}|X_t|^2\right)\le C_3,
\end{align*}
where $C_3$ is a positive constant.
}
\end{lem}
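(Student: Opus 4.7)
The plan is to establish an inequality of the form
\begin{equation*}
\mathbb{E}\Big(\sup_{s\in[0,t]}|X_s|^2\Big) \le C + C'\int_0^t \mathbb{E}\Big(\sup_{u\in[0,r]}|X_u|^2\Big)\,{\rm d}r
\end{equation*}
on $[0,T]$ and then conclude by Gronwall's inequality. Starting from the integral form of \eqref{MVSDE}, I would split into four pieces ($\xi$, drift, standard Brownian integral, and fBm integral), apply $(a+b+c+d)^2 \le 4(a^2+b^2+c^2+d^2)$, and then take the supremum followed by expectation. The $\xi$ piece contributes $4\mathbb{E}|\xi|^2 < \infty$ by hypothesis. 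For the drift, the Cauchy--Schwarz inequality in the time variable yields a control by $4t\int_0^t|b(r,X_r,\mu_r)|^2\,{\rm d}r$; for the Brownian integral, the Burkholder--Davis--Gundy inequality yields a control by $C\,\mathbb{E}\int_0^t\|\sigma_W(r,X_r,\mu_r)\|^2\,{\rm d}r$; Lemma \ref{lem2} then converts both into terms of the form $\int_0^t K_1(r)\bigl(1+\mathbb{E}|X_r|^2+\mathbb{E}\mathcal{W}_2^2(\mu_r)\bigr)\,{\rm d}r$.

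For the fractional Brownian integral I would essentially repeat the argument of \eqref{frac1}--\eqref{frac} from the proof of Theorem \ref{th1}, replacing $\sigma_H(r,\mu_r)-\sigma_H(r,\nu_r)$ by $\sigma_H(r,\mu_r)$ and the Lipschitz bound by the linear-growth estimate $\|\sigma_H(r,\mu_r)\|^2 \le K_1(r)\bigl(1+\mathcal{W}_2^2(\mu_r)\bigr)$ supplied by Lemma \ref{lem2}. This produces a control of the form $C_{\kappa,H}\,t^{2H-1}\int_0^t K_1(r)\bigl(1+\mathcal{W}_2^2(\mu_r)\bigr)\,{\rm d}r$, with $\kappa\in(1-H,1/2)$ chosen as in Theorem \ref{th1}.

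The key observation that collapses the conditional-law dependence into a genuine Gronwall inequality is that, since $\mu_r=\mathscr{L}(X_r|\mathscr{F}_r^0)$, one has
\begin{equation*}
\mathcal{W}_2^2(\mu_r)=\int_{\mathbb{R}^d}|x|^2\mu_r({\rm d}x)=\mathbb{E}\bigl[|X_r|^2\,\bigl|\,\mathscr{F}_r^0\bigr],
\end{equation*}
so $\mathbb{E}\mathcal{W}_2^2(\mu_r)=\mathbb{E}|X_r|^2\le \mathbb{E}\sup_{u\in[0,r]}|X_u|^2$. Combined with the boundedness of $K_1$ on $[0,T]$, this delivers precisely the Gronwall-ready inequality above, and the resulting constant $C_3$ depends only on $T$, $\sup_{[0,T]}K_1$, $\mathbb{E}|\xi|^2$, $H$ and $\kappa$.

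The one subtle point, and the main technical obstacle, is that the a priori finiteness of $\mathbb{E}\sup_{s\in[0,t]}|X_s|^2$ is not immediate from $X\in\mathcal{L}^2(\mathbb{R}^d)$, which only guarantees $\sup_t\mathbb{E}|X_t|^2<\infty$. I would handle this by a standard localization: introduce the stopping times $\tau_n=\inf\{t\ge 0:|X_t|\ge n\}\wedge T$, run the whole estimate for the stopped process $X_{\cdot\wedge\tau_n}$ (noting that Assumption \ref{ass1} and the fBm estimate pass through the stopping), obtain a bound uniform in $n$, and then send $n\to\infty$ via Fatou's lemma.
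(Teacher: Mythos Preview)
Your proposal is correct and matches the paper's proof almost line for line: integral form, the four-piece split with H\"older/BDG on the drift and Brownian terms, the fBm estimate \eqref{frac} recycled with linear growth in place of the Lipschitz bound, the identity $\mathbb{E}\,\mathcal{W}_2^2(\mu_r)=\mathbb{E}|X_r|^2$ (this is the paper's \eqref{conwass}), and Gronwall.

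One remark on the localization you flag as ``the main technical obstacle'': it is in fact unnecessary here, and the paper simply omits it. Since Theorem~\ref{th1} already places $X\in\mathcal{L}^2(\mathbb{R}^d)$, i.e.\ $\sup_{t\in[0,T]}\mathbb{E}|X_t|^2<\infty$, the right-hand side of your pre-Gronwall inequality is finite \emph{before} any localization: after taking expectations and using \eqref{conwass}, every integrand is dominated by $C\bigl(1+\mathbb{E}|X_r|^2\bigr)$, so the integrals are bounded by $CT\bigl(1+\sup_r\mathbb{E}|X_r|^2\bigr)<\infty$. This already yields $\mathbb{E}\sup_{s\le t}|X_s|^2<\infty$, after which Gronwall delivers the explicit constant. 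Your stopping-time route would actually be delicate for the fBm term (fractional Brownian motion is not a semimartingale, so the usual optional-stopping machinery does not apply), so it is fortunate that it is not needed.
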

\begin{proof}
Rewrite \eqref{MVSDE} to the integral form as follows:
\begin{equation*}
\begin{aligned}
X_t=&\xi+\int_0^tb(s, X_s, \mathscr{L}(X_s|\mathscr{F}_s^0)){\rm d}s\\
&+\int_0^t\sigma_W(s, X_s, \mathscr{L}(X_s|\mathscr{F}_s^0)){\rm d}W_s+\int_0^t\sigma_H(s, \mathscr{L}(X_s|\mathscr{F}_s^0)){\rm d}B_s^H.
\end{aligned}
\end{equation*}
Similarly with \eqref{frac}, by Assumption \ref{ass1} and the Burkholder-Davis-Gundy inequality, we get
\begin{equation*}
\begin{aligned}
&\mathbb{E}\left(\sup\limits_{0\le s\le t}|X_s|^2\right)\\
\le& 4\|\xi\|^2+4T\mathbb{E}\int_0^t\left|b(s, X_s, \mathscr{L}(X_s|\mathscr{F}_s^0))\right|^2{\rm d}s\\
&+16\mathbb{E}\int_0^t\left\|\sigma_W(s, X_s, \mathscr{L}(X_s|\mathscr{F}_s^0))\right\|^2{\rm d}s+4\mathbb{E}\left(\sup\limits_{0\le s\le t}\left|\int_0^s\sigma_H(r, \mathscr{L}(X_r|\mathscr{F}_r^0)){\rm d}B_r^H\right|^2\right)\\
\le& C(T)\left(1+\mathbb{E}\int_0^t[|X_s|^2+\mathcal{W}_{2}^2(\mathscr{L}(X_s|\mathscr{F}_s^0))]{\rm d}s\right)
\end{aligned}
\end{equation*}
Since for all $\omega^0\in\Omega^0$, one have $(\mathscr{L}(X_s|\mathscr{F}_s^0)(\omega^0))(\mathcal{D})=\mathbb{P}^1(X_s(\omega^0, \cdot)\in \mathcal{D})$ for $\mathcal{D}\in\mathscr{B}(\mathbb{R}^d)$, and by the definition of $\mathcal{W}_{2}(\mu)$, we get
\begin{equation}\label{conwass}
\begin{aligned}
\mathbb{E}(\mathcal{W}_{2}^2(\mathscr{L}(X_s|\mathscr{F}_s^0)))\le\mathbb{E}(\mathbb{E}^1(|X_s|^2))=\mathbb{E}|X_s|^2.
\end{aligned}
\end{equation}
By using the Gronwall inequality, the desired result follows.
\end{proof}

\section{Averaging Principle}
Next, we will study the averaging principle for the the following equation:
\begin{equation}\label{epsimv}
\begin{aligned}
{\rm d}X_t^{\epsilon}=&b\left(\frac{t}{\epsilon}, X_t^{\epsilon}, \mathscr{L}(X_t^{\epsilon}|\mathscr{F}_t^0)\right){\rm d}t+\sigma_W\left(\frac{t}{\epsilon}, X_t^{\epsilon}, \mathscr{L}(X_t^{\epsilon}|\mathscr{F}_t^0)\right){\rm d}W_t\\
&+\sigma_H\left(\frac{t}{\epsilon}, \mathscr{L}(X_t^{\epsilon}|\mathscr{F}_t^0)\right){\rm d}B_t^H
\end{aligned}
\end{equation}
with the initial data $X_0^{\epsilon}=X_0=\xi\in\mathcal{L}^2(\mathbb{R}^d)$, and $\epsilon$ is a positive parameter. Suppose for \eqref{epsimv}, Assumption \ref{ass1} is also satisfied. Then, by Theorem \ref{th1}, \eqref{epsimv} admits a unique strong solution $X_t^{\epsilon}, t\in[0,T]$.
Define the following averaged equation:
\begin{equation}\label{average}
{\rm d}\bar{X}_t=\bar{b}(\bar{X}_t, \mathscr{L}(\bar{X}_t)){\rm d}t+\bar{\sigma}_W(\bar{X}_t, \mathscr{L}(\bar{X}_t)){\rm d}W_t+\bar{\sigma}_H(\mathscr{L}(\bar{X}_t)){\rm d}B_t^H
\end{equation}
with the initial data $\bar{X}_0=X_0=\xi$. Here, $\bar{b}: \mathbb{R}^d\times\mathcal{P}_2(\mathbb{R}^d)\rightarrow\mathbb{R}^d$, $\bar{\sigma}_W: \mathbb{R}^d\times\mathcal{P}_2(\mathbb{R}^d)\rightarrow\mathbb{R}^d\otimes\mathbb{R}^r$, $\bar{\sigma}_H: \mathcal{P}_2(\mathbb{R}^d)\rightarrow\mathbb{R}^d\otimes\mathbb{R}^m$ are Borel measurable functions. Now, we are going to show that the solution $X_t^{\epsilon}$ could be approximated by the solution $\bar{X}_t$. Firstly, in order to ensure the well-posedness of \eqref{average}, we impose another assumption.

\begin{ass}\label{ass2}
{\rm There exists a bounded function $K_2: (0, \infty)\rightarrow(0, \infty)$ with $\lim\limits_{T\rightarrow\infty}K_2(T)=0$ such that for any $x\in\mathbb{R}^d, \mu\in \mathcal{P}_{2}(\mathbb{R}^d)$
\begin{equation*}
\begin{aligned}
&\sup\limits_{t\ge 0}\left|\frac{1}{T}\int_t^{t+T}[b(s, x, \mu)-\bar{b}(x, \mu)]{\rm d}s\right|^2+\sup\limits_{t\ge 0}\frac{1}{T}\int_t^{t+T}\|\sigma_W(s, x, \mu)-\bar{\sigma}_W(x, \mu)\|^2{\rm d}s\\
\le& K_2(T)(1+|x|^2+\mathbb{W}_{2}^2(\mu,\delta_0)),
\end{aligned}
\end{equation*}
and
\begin{align*}
\sup\limits_{t\ge 0}\frac{1}{T}\int_t^{t+T}\|\sigma_H(s, \mu)-\bar{\sigma}_H(\mu)\|^2{\rm d}s\le K_2(T)(1+\mathbb{W}_{2}^2(\mu,\delta_0)).
\end{align*}
 }
\end{ass}

\begin{lem}
{\rm Under Assumption \ref{ass1} and Assumption \ref{ass2}, the averaged McKean-Vlasov SDE \eqref{average} admits a unique strong solution. Moreover, the solution satisfies
\begin{align*}
\mathbb{E}\left(\sup\limits_{0\le t\le T}|\bar{X}_t|^2\right)\le C_4,
\end{align*}
where $C_4$ is a positive constant.
}
\end{lem}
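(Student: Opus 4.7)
The plan is to reduce the statement to Theorem \ref{th1} and Lemma \ref{mvmoment} by showing that the averaged coefficients $\bar b, \bar\sigma_W, \bar\sigma_H$ themselves satisfy Assumption \ref{ass1} (viewed as time-independent). Concretely, I will first extract a Lipschitz and linear-growth structure for the averaged coefficients from the combination of Assumption \ref{ass1} and Assumption \ref{ass2}, and then rerun (essentially verbatim) the fixed-point argument of Theorem \ref{th1} together with the Gronwall estimate of Lemma \ref{mvmoment}. Since $\mathscr{L}(\bar X_t)$ is deterministic, the conditional Wasserstein appearing in those proofs simplifies to the ordinary Wasserstein, and the analytic estimates are unchanged.

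The first step is to derive the Lipschitz property of $\bar b$. Writing
\begin{equation*}
\bar b(x,\mu)-\bar b(y,\nu)=\Bigl[\bar b(x,\mu)-\tfrac1T\!\!\int_0^T\!\!\!b(s,x,\mu){\rm d}s\Bigr]+\tfrac1T\!\!\int_0^T\!\!\![b(s,x,\mu)-b(s,y,\nu)]{\rm d}s+\Bigl[\tfrac1T\!\!\int_0^T\!\!\!b(s,y,\nu){\rm d}s-\bar b(y,\nu)\Bigr],
\end{equation*}
squaring, and using $(a+b+c)^2\le 3(a^2+b^2+c^2)$, the Cauchy--Schwarz inequality, Assumption \ref{ass1} and the boundedness of $K_1$ gives, after letting $T\to\infty$ (so that the end terms vanish by Assumption \ref{ass2}),
\begin{equation*}
|\bar b(x,\mu)-\bar b(y,\nu)|^2\le 3\sup_{s\ge 0}K_1(s)\,(|x-y|^2+\mathbb{W}_2^2(\mu,\nu)).
\end{equation*}
The same reasoning yields analogous Lipschitz bounds for $\bar\sigma_W$ and $\bar\sigma_H$. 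For the growth at the origin, $|b(s,0,\delta_0)|^2\le K_1(s)$ together with Cauchy--Schwarz implies $|\tfrac1T\int_0^T b(s,0,\delta_0){\rm d}s|^2\le \sup_s K_1(s)$, and letting $T\to\infty$ (the end correction is controlled by $K_2(T)(1+0+0)$) gives a uniform bound on $|\bar b(0,\delta_0)|^2$; similarly for $\|\bar\sigma_W(0,\delta_0)\|^2$ and $\|\bar\sigma_H(\delta_0)\|^2$. Thus $(\bar b,\bar\sigma_W,\bar\sigma_H)$ satisfies Assumption \ref{ass1} with some constant $\bar K_1:=3\sup_{s\ge 0}K_1(s)$.

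Once this verification is in hand, the existence and uniqueness of a strong solution to \eqref{average} follows by repeating the fixed-point argument of Theorem \ref{th1}: build the auxiliary SDE by freezing $\mu_t=\mathscr L(Y_t)$, invoke \cite[Theorem 2.2]{gn08} for the non-McKean-Vlasov equation, and observe that in the contraction estimate \eqref{diff}--\eqref{frac} the bound $\mathbb{W}_2^2(\mathscr L(Y_s),\mathscr L(\tilde Y_s))\le \mathbb{E}|Y_s-\tilde Y_s|^2$ (trivially from any coupling) plays the role of the conditional estimate \eqref{conwass}, so iteration of $\Psi$ is again a strict contraction. The moment bound $\mathbb{E}(\sup_{0\le t\le T}|\bar X_t|^2)\le C_4$ is then obtained exactly as in Lemma \ref{mvmoment}: use Lemma \ref{lem2} (now for the averaged coefficients), the Burkholder--Davis--Gundy inequality, and the fractional-noise estimate \eqref{frac}, then close the Gronwall loop using $\mathcal{W}_2^2(\mathscr L(\bar X_s))=\mathbb{E}|\bar X_s|^2$.

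The only non-routine point, and the main place where care is needed, is the passage from the averaging bound of Assumption \ref{ass2} to genuine Lipschitz continuity of the averaged coefficients: one must be sure that Assumption \ref{ass2} is strong enough to let $T\to\infty$ in both directions simultaneously (at $(x,\mu)$ and at $(y,\nu)$) so that the two ``remainder'' terms in the decomposition above vanish. This is guaranteed by the \emph{uniform-in-$t$} character of the supremum in Assumption \ref{ass2}, but it is worth flagging, since without it one would only obtain Lipschitz continuity of the time-averaged coefficients, not of the limit $\bar b$ itself. Every other step is a straightforward adaptation of arguments already carried out in Theorem \ref{th1} and Lemma \ref{mvmoment}.
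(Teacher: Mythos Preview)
Your proposal is correct and follows essentially the same route as the paper: you derive Lipschitz and growth bounds for $\bar b,\bar\sigma_W,\bar\sigma_H$ via the three-term decomposition and the limit $T\to\infty$ (exactly the paper's argument leading to \eqref{barb}--\eqref{barh}), and then invoke the fixed-point/Gronwall machinery of Theorem~\ref{th1} and Lemma~\ref{mvmoment}. The only cosmetic difference is that the paper writes out the moment estimate \eqref{barxt} explicitly rather than citing Lemma~\ref{mvmoment}, but the content is identical.
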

\begin{proof}
For any $x, y\in\mathbb{R}^d, \mu,\nu\in \mathcal{P}_{2}(\mathbb{R}^d)$ and $T>0$, by Assumption \ref{ass1} and Assumption \ref{ass2}, we compute
\begin{equation*}
\begin{aligned}
&|\bar{b}(x, \mu)-\bar{b}(y, \nu)|^2\\
=&\left|\bar{b}(x, \mu)-\frac{1}{T}\int_0^Tb(s,x,\mu){\rm d}s+\frac{1}{T}\int_0^T[b(s,x,\mu)-b(s,y,\nu)]{\rm d}s+\frac{1}{T}\int_0^Tb(s,y,\nu){\rm d}s-\bar{b}(y, \nu)\right|^2\\
\le& 3\left|\frac{1}{T}\int_0^T[\bar{b}(x, \mu)-b(s, x, \mu)]{\rm d}s\right|^2+3\left|\frac{1}{T}\int_0^T[b(s, x, \mu)-b(s, y, \nu)]{\rm d}s\right|^2\\
&+3\left|\frac{1}{T}\int_0^T[b(s, y, \nu)-\bar{b}(y, \nu)]{\rm d}s\right|^2\\
\le& 3K_2(T)(2+|x|^2+|y|^2+\mathbb{W}_{2}^2(\mu,\delta_0)+\mathbb{W}_{2}^2(\nu,\delta_0))+3K_1(T)(|x-y|^2+\mathbb{W}_{2}^2(\mu,\nu)),
\end{aligned}
\end{equation*}
and
\begin{equation*}
\begin{aligned}
|\bar{b}(0, \delta_0)|^2\le& 2\left|\frac{1}{T}\int_0^T[\bar{b}(0, \delta_0)-b(s, 0, \delta_0)]{\rm d}s\right|^2+2\left|\frac{1}{T}\int_0^Tb(s, 0, \delta_0){\rm d}s\right|^2\le 2(K_2(T)+K_1(T)).
\end{aligned}
\end{equation*}
By taking $T\rightarrow\infty$, it is easy to see that, there exists a positive constant $K$ such that
\begin{equation}\label{barb}
\begin{aligned}
|\bar{b}(x, \mu)-\bar{b}(y, \nu)|^2\le K(|x-y|^2+\mathbb{W}_{2}^2(\mu,\nu)),~ ~ ~  |\bar{b}(0, \delta_0)|^2\le L
\end{aligned}
\end{equation}
since we have $\lim\limits_{T\rightarrow\infty}K_2(T)=0$ and $K_1$ is bounded. Similarly, by Assumption \ref{ass1} and Assumption \ref{ass2}, we have
\begin{equation*}
\begin{aligned}
&\|\bar{\sigma}_W(x, \mu)-\bar{\sigma}_W(y, \nu)\|^2\\
\le& 3K_2(T)(2+|x|^2+|y|^2+\mathbb{W}_{2}^2(\mu,\delta_0)+\mathbb{W}_{2}^2(\nu,\delta_0))+3K_1(T)(|x-y|^2+\mathbb{W}_{2}^2(\mu,\nu)),
\end{aligned}
\end{equation*}
\begin{equation*}
\begin{aligned}
\|\bar{\sigma}_H(\mu)-\bar{\sigma}_H(\nu)\|^2\le 3K_2(T)(2+\mathbb{W}_{2}^2(\mu,\delta_0)+\mathbb{W}_{2}^2(\nu,\delta_0))+3K_1(T)\mathbb{W}_{2}^2(\mu,\nu),
\end{aligned}
\end{equation*}
and
\begin{equation*}
\begin{aligned}
\|\bar{\sigma}_W(0, \delta_0)\|^2\le 2(K_2(T)+K_1(T)),~ ~ ~\|\bar{\sigma}_H(\delta_0)\|^2\le 2(K_2(T)+K_1(T)).
\end{aligned}
\end{equation*}
Then, by taking $T\rightarrow\infty$, we get
\begin{equation}\label{barw}
\begin{aligned}
\|\bar{\sigma}_W(x, \mu)-\bar{\sigma}_W(y, \nu)\|^2\le K(|x-y|^2+\mathbb{W}_{2}^2(\mu,\nu)),~ ~ ~  \|\bar{\sigma}_W(0, \delta_0)\|^2\le L,
\end{aligned}
\end{equation}
\begin{equation}\label{barh}
\begin{aligned}
\|\bar{\sigma}_H(\mu)-\bar{\sigma}_H(\nu)\|^2\le K\mathbb{W}_{2}^2(\mu,\nu),~ ~ ~  \|\bar{\sigma}_W(\delta_0)\|^2\le L.
\end{aligned}
\end{equation}
Similarly to the proof process of Theorem \ref{th1}, by \eqref{barb}-\eqref{barh}, we can show that there exists a unique strong solution $\bar{X}_t$ to \eqref{average}. Now we are going to show the finiteness of the moment. By \eqref{barb}-\eqref{barh} and the Burkholder-Davis-Gundy inequality, we get
\begin{equation}\label{barxt}
\begin{aligned}
&\mathbb{E}\left(\sup\limits_{0\le s\le t}|\bar{X}_s|^2\right)\\
\le& 4\|\xi\|^2+4T\mathbb{E}\int_0^t\left|\bar{b}(\bar{X}_s, \mathscr{L}(\bar{X}_s))\right|^2{\rm d}s+16\mathbb{E}\int_0^t\left\|\bar{\sigma}_W(\bar{X}_s, \mathscr{L}(\bar{X}_s))\right\|^2{\rm d}s\\
&+4\mathbb{E}\left(\sup\limits_{0\le s\le t}\left|\int_0^s\bar{\sigma}_H(\mathscr{L}(\bar{X}_r)){\rm d}B_r^H\right|^2\right)\\
\le& C(T)\left(1+\mathbb{E}\int_0^t[|\bar{X}_s|^2+\mathcal{W}_{2}^2(\mathscr{L}(\bar{X}_s))]{\rm d}s\right)
\end{aligned}
\end{equation}
Referring to the proof in \cite[Proposition 3.4]{DST19}, for the $\theta$-Wasserstein metric, we have
\begin{align}\label{wasserme}
\mathbb{W}_{\theta}^{2}(\mathscr{L}({X_t}),\delta_{0})=\mathcal{W}_{\theta}^2(\mathscr{L}({X_{t}}))\leq\mathbb{E}|X_{t}|^2.
\end{align}
Combining with \eqref{barxt} and \eqref{wasserme}, we get
\begin{equation*}
\begin{aligned}
&\mathbb{E}\left(\sup\limits_{0\le s\le t}|\bar{X}_s|^2\right)\le C(T)+C(T)\int_0^t\mathbb{E}\left(\sup\limits_{0\le r\le s}|\bar{X}_r|^2\right){\rm d}s
\end{aligned}
\end{equation*}
The Gronwall inequality then leads to the desired assertion.
\end{proof}

\begin{lem}\label{epsilon}
{\rm Under Assumption \ref{ass1}, \eqref{epsimv} admits a unique strong solution, and the solution satisfies
\begin{align}\label{xepsilon2}
\mathbb{E}\left(\sup\limits_{0\le t\le T}|X^{\epsilon}_t|^2\right)\le C_5^{\epsilon}.
\end{align}
Moreover, for $0\leq s\le t\leq T $ we have
\begin{align}\label{difepsilon}
\mathbb{E}|X^{\epsilon}_t-X^{\epsilon}_s|^2\le C_6^{\epsilon}(|t-s|+|t-s|^2+|t-s|^{2H})
\end{align}
where $C_5^{\epsilon}$ is a positive constant depending on $\epsilon$.
}
\end{lem}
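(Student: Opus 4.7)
The plan is to apply Theorem \ref{th1} and Lemma \ref{mvmoment} to the equation with rescaled coefficients, then adapt the estimates in the proof of Theorem \ref{th1} to obtain the time-increment bound. Since $K_1$ is bounded, say by $M:=\sup_{t\ge0}K_1(t)<\infty$, the coefficients $(t,x,\mu)\mapsto b(t/\epsilon,x,\mu)$, $\sigma_W(t/\epsilon,x,\mu)$, $\sigma_H(t/\epsilon,\mu)$ satisfy Assumption \ref{ass1} uniformly in $\epsilon$ (with the constant function $M$ in place of $K_1$). Thus Theorem \ref{th1} yields a unique strong solution $X^\epsilon$, and the moment estimate \eqref{xepsilon2} follows verbatim from the argument in Lemma \ref{mvmoment}, exactly as the bound on $\mathbb{E}(\sup_{0\le t\le T}|X_t|^2)$ there.

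For the continuity estimate \eqref{difepsilon}, write $X_t^\epsilon-X_s^\epsilon=I_1+I_2+I_3$, where $I_1$, $I_2$, $I_3$ denote the drift, standard Brownian and fractional Brownian increments over $[s,t]$ respectively. By the Cauchy--Schwarz inequality, Lemma \ref{lem2}, and \eqref{conwass} combined with \eqref{xepsilon2}, one has $\mathbb{E}|I_1|^2\le|t-s|\int_s^t\mathbb{E}|b(r/\epsilon,X_r^\epsilon,\mathscr{L}(X_r^\epsilon|\mathscr{F}_r^0))|^2\,\mathrm{d}r\le C(1+C_5^\epsilon)|t-s|^2$. Similarly, It\^o's isometry together with Lemma \ref{lem2} yields $\mathbb{E}|I_2|^2\le C(1+C_5^\epsilon)|t-s|$.

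The main obstacle is controlling the fractional Brownian term $I_3$. For this I would reproduce the stochastic-Fubini / fractional-calculus argument of \eqref{frac1}--\eqref{frac} on the subinterval $[s,t]$ rather than on $[0,t]$: after the change of variable $r\mapsto r-s$, the computation on $[s,t]$ reduces to the one already carried out on $[0,t-s]$, provided one chooses $1-H<\kappa<1/2$ and applies the same bound for the covariance of the centered Gaussian integral $\int_s^u(u-r)^{\kappa-1}\sigma_H(r/\epsilon,\mathscr{L}(X_r^\epsilon|\mathscr{F}_r^0))\,\mathrm{d}B_r^H$. This produces
\begin{equation*}
\mathbb{E}|I_3|^2\le C_{\kappa,H}|t-s|^{2H-1}\int_s^t\mathbb{E}\|\sigma_H(r/\epsilon,\mathscr{L}(X_r^\epsilon|\mathscr{F}_r^0))\|^2\,\mathrm{d}r,
\end{equation*}
and invoking Lemma \ref{lem2}, \eqref{conwass} and \eqref{xepsilon2} once more bounds the right-hand side by $C(1+C_5^\epsilon)|t-s|^{2H}$. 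Summing the three contributions gives \eqref{difepsilon} with $C_6^\epsilon$ a suitable multiple of $1+C_5^\epsilon$.

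The delicate point to check carefully is the shift argument in $I_3$: one must ensure that passing from $[0,t]$ to $[s,t]$ preserves the power $|t-s|^{2H}$ rather than producing an unwanted factor of $T^{2H-1}$, and that the stochastic Fubini theorem can still be applied since the integrand remains adapted on the shifted interval. Everything else amounts to a direct transposition of the earlier estimates.
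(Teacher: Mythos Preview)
Your proposal is correct and follows essentially the same route as the paper: reduce existence, uniqueness and the moment bound to Theorem~\ref{th1} and Lemma~\ref{mvmoment} for the rescaled coefficients, then split $X_t^\epsilon-X_s^\epsilon$ into drift, Brownian and fractional Brownian increments and estimate each term exactly as you outline. The paper handles the fractional term just as you suggest, obtaining the factor $(t-s)^{2H-1}$ (not $T^{2H-1}$) by re-running the argument of \eqref{frac1}--\eqref{frac} on $[s,t]$; your remark that the Gaussian-covariance step goes through conditionally on $\mathscr{F}^0$ is precisely what is implicitly used there.
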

\begin{proof}
Since by taking similar process of Theorem \ref{th1} and Lemma \ref{mvmoment}, the first part can be obtained. We now concentrate on \eqref{difepsilon}.
By \eqref{epsimv}, we get
\begin{align*}
X_{t}^{\epsilon}-X_{s}^{\epsilon}=&\int_{s}^tb\left(\frac{r}{\epsilon}, X_{r}^{\epsilon}, \mathcal{L}({X_{r}^{\epsilon}|\mathscr{F}_r^0})\right){\rm d}r+\int_{s}^t\sigma_W\left(\frac{r}{\epsilon}, X_{r}^{\epsilon}, \mathcal{L}({X_{r}^{\epsilon}|\mathscr{F}_r^0})\right){\rm d}W_t\\
&+\int_{s}^t\sigma_H\left(\frac{r}{\epsilon},\mathcal{L}({X_{r}^{\epsilon}|\mathscr{F}_r^0})\right){\rm d}B_{r}^H.
\end{align*}
By the elementary inequality, the H\"{o}lder inequality, the Burkhold-Davis-Gundy inequality, Lemma \ref{lem2}, \eqref{xepsilon2}, combining with the derivation process of \eqref{frac} and \eqref{conwass}, we get
\begin{align*}
\mathbb{E}|X_{t}^{\epsilon}-X_{s}^{\epsilon}|^2\leq &3\mathbb{E}\bigg\lvert \int_{s}^tb\left(\frac{r}{\epsilon}, X_{r}^{\epsilon},\mathcal{L}({X_{r}^{\epsilon}|\mathscr{F}_r^0})\right){\rm d}r\bigg\rvert^{2}\\
&+3\mathbb{E}\bigg\lvert \int_{s}^t\sigma_W\left(\frac{r}{\epsilon}, X_{r}^{\epsilon}, \mathcal{L}({X_{r}^{\epsilon}|\mathscr{F}_r^0})\right){\rm d}W_t\bigg\rvert^{2}+3\mathbb{E}\bigg\lvert \int_{s}^t\sigma_H\left(\frac{r}{\epsilon},\mathcal{L}({X_{r}^{\epsilon}|\mathscr{F}_r^0})\right){\rm d}B_{r}^H\bigg\rvert^{2}\\
\leq & [3(t-s)+12]\mathbb{E}\int_{s}^{t}K_1\left(\frac{r}{\epsilon}\right)\left(1+|X_{r}^{\epsilon}|^2+\mathcal{W}_{2}^2(\mathcal{L}({X_{r}^{\epsilon}|\mathscr{F}_r^0}))\right){\rm d}r\\
&+3C_{\kappa,H}(t-s)^{2H-1}\int_{s}^tK_1\left(\frac{r}{\epsilon}\right)\left(1+\mathcal{W}_{2}^2(\mathcal{L}({X_{r}^{\epsilon}|\mathscr{F}_r^0}))\right){\rm d}r\\
\leq &C_6^{\epsilon}(|t-s|+|t-s|^2+|t-s|^{2H}).
\end{align*}
This completes the proof.
\end{proof}

\begin{lem}\label{wassesti}
{\rm Under Assumption \ref{ass1}, we have
\begin{align*}
\mathbb{E}\left(\mathbb{W}_{2}^2(\mathscr{L}(X_t^{\epsilon}|\mathscr{F}_t^0), \mathscr{L}(\bar{X}_t))\right)\le\mathbb{E}|X_t^{\epsilon}-\bar{X}_t|^2,
\end{align*}
and
\begin{align*}
\mathbb{W}_{2}^2(\mathscr{L}(X_t^{\epsilon}|\mathscr{F}_t^0), \mathscr{L}(X_s^{\epsilon}|\mathscr{F}_s^0))\le\mathbb{E}^1|X_t^{\epsilon}-X_s^{\epsilon}|^2.
\end{align*}
}
\end{lem}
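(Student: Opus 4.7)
The plan is to use the variational characterization of the Wasserstein distance: for any coupling $\pi\in\Pi(\mu,\nu)$, $\mathbb{W}_{2}^{2}(\mu,\nu)\le\int|x-y|^{2}\,\pi({\rm d}x,{\rm d}y)$. The key observation, which is already exploited in \eqref{conwass}, is that under the product structure $\Omega=\Omega^0\times\Omega^1$ with $\mathbb{P}=\mathbb{P}^0\otimes\mathbb{P}^1$, a regular version of the conditional law $\mathscr{L}(Z\mid\mathscr{F}_r^0)(\omega^0)$ of any $\mathscr{F}_r$-measurable random variable $Z$ coincides with the $\mathbb{P}^1$-pushforward of the slice $\omega^1\mapsto Z(\omega^0,\omega^1)$. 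Consequently, for any two such random variables, freezing $\omega^0$ and pushing the pair of slices forward under $\mathbb{P}^1$ yields a natural coupling between their conditional laws, and the associated cost integral bounds the squared Wasserstein distance from above.

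I would dispatch the second inequality first, as it is the cleaner one. Fix $\omega^0\in\Omega^0$ and let $\pi_{\omega^0}$ be the $\mathbb{P}^1$-law of $\bigl(X_t^{\epsilon}(\omega^0,\cdot),\,X_s^{\epsilon}(\omega^0,\cdot)\bigr)$ on $\mathbb{R}^d\times\mathbb{R}^d$. Because $X_s^{\epsilon}$ is $\mathscr{F}_s$-measurable and $\mathscr{F}_s^0\subset\mathscr{F}_t^0$, its $\omega^0$-content is already captured by $\mathscr{F}_s^0$, so the two marginals of $\pi_{\omega^0}$ are exactly $\mathscr{L}(X_t^{\epsilon}\mid\mathscr{F}_t^0)(\omega^0)$ and $\mathscr{L}(X_s^{\epsilon}\mid\mathscr{F}_s^0)(\omega^0)$. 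The variational inequality then gives the required pointwise bound $\mathbb{W}_{2}^{2}(\mathscr{L}(X_t^{\epsilon}\mid\mathscr{F}_t^0),\mathscr{L}(X_s^{\epsilon}\mid\mathscr{F}_s^0))\le\mathbb{E}^1|X_t^{\epsilon}-X_s^{\epsilon}|^2$ for $\mathbb{P}^0$-a.e.\ $\omega^0$.

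For the first inequality I would run the same construction with the pair $\bigl(X_t^{\epsilon}(\omega^0,\cdot),\,\bar{X}_t(\omega^0,\cdot)\bigr)$: the first marginal is again $\mathscr{L}(X_t^{\epsilon}\mid\mathscr{F}_t^0)(\omega^0)$, the cost of this coupling is $\mathbb{E}^1|X_t^{\epsilon}(\omega^0,\cdot)-\bar{X}_t(\omega^0,\cdot)|^2$, and integration against $\mathbb{P}^0$ followed by Fubini delivers the stated bound $\mathbb{E}\bigl[\mathbb{W}_{2}^{2}(\mathscr{L}(X_t^{\epsilon}\mid\mathscr{F}_t^0),\mathscr{L}(\bar{X}_t))\bigr]\le\mathbb{E}|X_t^{\epsilon}-\bar{X}_t|^2$. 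The one genuine subtlety, which I expect to be the main obstacle, is identifying the second marginal of this coupling with the unconditional law $\mathscr{L}(\bar{X}_t)$: strictly speaking the $\mathbb{P}^1$-slice law is $\mathscr{L}(\bar{X}_t\mid\mathscr{F}_t^0)(\omega^0)$. One resolves this by exploiting that the averaged equation \eqref{average} carries no $\mathscr{F}_t^0$-measurable input other than the common initial datum $\xi$, so that after taking expectations in $\omega^0$ the two descriptions collapse to the same measure; once this identification is granted, both halves of the lemma reduce to the same $\mathbb{P}^1$-coupling argument.
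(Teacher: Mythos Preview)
Your approach matches the paper's: both exploit the product structure $\Omega=\Omega^0\times\Omega^1$ to identify conditional laws with $\mathbb{P}^1$-slice laws, form the natural $\mathbb{P}^1$-coupling of the two slices for fixed $\omega^0$, and bound $\mathbb{W}_2^2$ by the coupling cost. The only difference is that the paper resolves your flagged subtlety more directly---it simply asserts that $\bar{X}_t$ does not depend on $\Omega^0$, so the $\mathbb{P}^1$-law of $\bar{X}_t(\omega^0,\cdot)$ is already $\mathscr{L}(\bar{X}_t)$ for each $\omega^0$, and no ``collapse after taking expectations'' step is needed.
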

\begin{proof}
The main method comes from \cite[Lemma 3.2]{stw24}. Since for all $\omega^0\in\Omega^0$, one have $(\mathscr{L}(X_t^{\epsilon}|\mathscr{F}_t^0)(\omega^0))(\mathcal{D})=\mathbb{P}^1(X_t^{\epsilon}(\omega^0, \cdot)\in \mathcal{D})$ for $\mathcal{D}\in\mathscr{B}(\mathbb{R}^d)$. Besides, the mapping $\omega^0\rightarrow\mathscr{L}(X_t^{\epsilon}(\omega^0,\cdot))$ from $(\Omega^0, \mathscr{F}^0, \mathbb{P}^0)$ to $\mathcal{P}_2(\mathbb{R}^d)$ is almost surely well defined under $\mathbb{P}^0$, and this provides a conditional law of $X_t^{\epsilon}$ given $\mathscr{F}_t^0$. It is easy to see the process $\bar{X}_t$ does not depend on $\Omega^0$, so, under $\mathbb{P}^1$, the distribution of  $\bar{X}_t$ is $\mathscr{L}(\bar{X}_t)$. By the definition of Wasserstein distance, we get
\begin{align*}
\mathbb{W}_{2}^2(\mathscr{L}(X_t^{\epsilon}|\mathscr{F}_t^0)(\omega^0), \mathscr{L}(\bar{X}_t))\le\mathbb{E}^1|X_t^{\epsilon}(\omega^0,\cdot)-\bar{X}_t|^2,
\end{align*}
and
\begin{align*}
\mathbb{W}_{2}^2\left(\mathscr{L}(X_t^{\epsilon}|\mathscr{F}_t^0)(\omega^0), \mathscr{L}(X_s^{\epsilon}|\mathscr{F}_s^0)(\omega^0)\right)\le\mathbb{E}^1|X_t^{\epsilon}(\omega^0,\cdot)-X_s^{\epsilon}(\omega^0,\cdot)|^2.
\end{align*}
The desired result follows.
\end{proof}

\begin{lem}\label{diffb}
{\rm Under Assumption \ref{ass2}, we have
\begin{equation*}
\begin{aligned}
\lim\limits_{\epsilon\rightarrow 0}\mathbb{E}\left(\sup\limits_{0\le t\le T}\left|\int_0^t\left[b\left(\frac{r}{\epsilon}, X_r^{\epsilon}, \mathscr{L}(X_r^{\epsilon}|\mathscr{F}_r^0)\right)-\bar{b}(X_r^{\epsilon}, \mathscr{L}(X_r^{\epsilon}|\mathscr{F}_r^0))\right]{\rm d}r\right|^2\right)=0.
\end{aligned}
\end{equation*}
}
\end{lem}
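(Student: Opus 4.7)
The plan is to implement Khasminskii's time-discretization scheme: freeze the slow variable $(X_r^{\epsilon},\mathscr{L}(X_r^{\epsilon}\mid\mathscr{F}_r^0))$ on short intervals on which, however, the fast argument $r/\epsilon$ still sweeps across a long interval so that the averaging in Assumption \ref{ass2} kicks in. Concretely, fix $\delta=\delta(\epsilon)$ to be chosen later with $\delta\downarrow 0$ and $\delta/\epsilon\uparrow\infty$ (the standard choice $\delta=\sqrt{\epsilon}$ works), set $t_k=k\delta$ and $n(t)=\lfloor t/\delta\rfloor$, and decompose
\begin{equation*}
\int_0^t\bigl[b(r/\epsilon,X_r^{\epsilon},\mu_r^{\epsilon})-\bar b(X_r^{\epsilon},\mu_r^{\epsilon})\bigr]{\rm d}r=\sum_{k=0}^{n(t)-1}\int_{t_k}^{t_{k+1}}(\cdots){\rm d}r+\int_{t_{n(t)}}^t(\cdots){\rm d}r,
\end{equation*}
where $\mu_r^{\epsilon}=\mathscr{L}(X_r^{\epsilon}\mid\mathscr{F}_r^0)$. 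On each piece insert $\pm b(r/\epsilon,X_{t_k}^{\epsilon},\mu_{t_k}^{\epsilon})$ and $\pm\bar b(X_{t_k}^{\epsilon},\mu_{t_k}^{\epsilon})$ so that the integrand splits into (i) a ``freezing'' error involving $b(r/\epsilon,X_r^{\epsilon},\mu_r^{\epsilon})-b(r/\epsilon,X_{t_k}^{\epsilon},\mu_{t_k}^{\epsilon})$, (ii) the averaging core $b(r/\epsilon,X_{t_k}^{\epsilon},\mu_{t_k}^{\epsilon})-\bar b(X_{t_k}^{\epsilon},\mu_{t_k}^{\epsilon})$, and (iii) a second freezing error $\bar b(X_{t_k}^{\epsilon},\mu_{t_k}^{\epsilon})-\bar b(X_r^{\epsilon},\mu_r^{\epsilon})$.

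For the freezing errors (i) and (iii) I will use the Lipschitz bound from Assumption \ref{ass1} together with the Lipschitz property of $\bar b$ already established in \eqref{barb}, combine it with Lemma \ref{wassesti} to pass the Wasserstein distance between the conditional laws to the $L^2$ increment, and then apply the H\"older modulus estimate \eqref{difepsilon} from Lemma \ref{epsilon} giving $\mathbb{E}|X_r^{\epsilon}-X_{t_k}^{\epsilon}|^2\le C_6^{\epsilon}(\delta+\delta^2+\delta^{2H})$ for $r\in[t_k,t_{k+1}]$. After applying Cauchy--Schwarz twice (once inside each $\int_{t_k}^{t_{k+1}}$ and once to the sum of $n(t)\le T/\delta$ pieces), the contribution of each of these freezing errors to the $\sup_{t\in[0,T]}$ is bounded by $C T^2 K_1^{\sup}(\delta+\delta^2+\delta^{2H})$, which vanishes as $\delta\to0$.

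For the averaging core (ii), the change of variable $u=r/\epsilon$ rewrites the inner integral as
\begin{equation*}
\int_{t_k}^{t_{k+1}}[b(r/\epsilon,X_{t_k}^{\epsilon},\mu_{t_k}^{\epsilon})-\bar b(X_{t_k}^{\epsilon},\mu_{t_k}^{\epsilon})]{\rm d}r=\delta\cdot\frac{1}{T_\epsilon}\int_{t_k/\epsilon}^{t_k/\epsilon+T_\epsilon}[b(u,X_{t_k}^{\epsilon},\mu_{t_k}^{\epsilon})-\bar b(X_{t_k}^{\epsilon},\mu_{t_k}^{\epsilon})]{\rm d}u
\end{equation*}
with $T_\epsilon=\delta/\epsilon\to\infty$, so Assumption \ref{ass2} (applied pointwise in $\omega$, since $X_{t_k}^{\epsilon}$ is $\mathscr{F}_{t_k}$-measurable) together with Lemma \ref{wassesti} and the uniform moment bound \eqref{xepsilon2} yields a pointwise bound of order $\delta^2 K_2(\delta/\epsilon)(1+|X_{t_k}^{\epsilon}|^2+\mathbb{E}^1|X_{t_k}^\epsilon|^2)$. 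Summing and taking Cauchy--Schwarz again gives a bound $C T^2 K_2(\delta/\epsilon)$, which vanishes as $\epsilon\to 0$ by the hypothesis $K_2(T)\to0$.

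The leftover boundary integral over $[t_{n(t)},t]$ has length $\le\delta$ and is controlled directly by Lemma \ref{lem2} and \eqref{xepsilon2}, contributing at most $C\delta^2$. Choosing $\delta=\sqrt{\epsilon}$ makes both $\delta\to 0$ and $\delta/\epsilon\to\infty$ simultaneously, so all four contributions vanish and the limit is $0$. I expect the main obstacle to be the bookkeeping when exchanging $\mathbb{W}_2$ between conditional laws and the $L^2$-norm (handled by Lemma \ref{wassesti}) and making sure Assumption \ref{ass2} is legitimately applied with a random frozen argument $X_{t_k}^{\epsilon}$; the remaining estimates are routine consequences of Lemma \ref{lem2}, Lemma \ref{epsilon} and \eqref{barb}.
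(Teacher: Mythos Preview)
Your proposal is correct and follows essentially the same route as the paper's proof: the same Khasminskii discretization with step $\delta=\sqrt{\epsilon}$, the same three-term splitting into two freezing errors plus the averaging core, and the same tools (Assumption~\ref{ass1} and \eqref{barb} for the Lipschitz bounds, Lemma~\ref{wassesti} for passing from conditional Wasserstein distances to $L^2$-increments, Lemma~\ref{epsilon} for the modulus of continuity, and Assumption~\ref{ass2} after the change of variable $u=r/\epsilon$ for the core). The only cosmetic difference is that the paper squares the full sum via $|\sum_{i=0}^{N-1}a_i|^2\le N\sum|a_i|^2$ and thus carries an extra factor $N$ on the boundary term, whereas you separate the boundary piece first; both lead to the same vanishing estimates.
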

\begin{proof}
In order to show the result, we first take a partition to $[0,T]$. Set $\Delta=\sqrt{\epsilon}$. For $k=0,1,\cdots,N-1$, let $t_k=k\Delta$ and $t_N=T$ where $t_N-t_{N-1}\le\Delta.$ It is easy to see that $T\le N\Delta<T+\Delta$. Then we compute
\begin{equation}\label{bb}
\begin{aligned}
&\left|\int_0^t\left[b\left(\frac{r}{\epsilon}, X_r^{\epsilon}, \mathscr{L}(X_r^{\epsilon}|\mathscr{F}_r^0)\right)-\bar{b}(X_r^{\epsilon}, \mathscr{L}(X_r^{\epsilon}|\mathscr{F}_r^0))\right]{\rm d}r\right|^2\\
\le&N\sum\limits_{i=0}^{N-2}\left|\int_{t_i}^{t_{i+1}}\left[b\left(\frac{r}{\epsilon}, X_r^{\epsilon}, \mathscr{L}(X_r^{\epsilon}|\mathscr{F}_r^0)\right)-\bar{b}(X_r^{\epsilon}, \mathscr{L}(X_r^{\epsilon}|\mathscr{F}_r^0))\right]{\rm d}r\right|^2\\
&+N\left|\int_{[\frac{t}{\Delta}]\Delta}^{t}\left[b\left(\frac{r}{\epsilon}, X_r^{\epsilon}, \mathscr{L}(X_r^{\epsilon}|\mathscr{F}_r^0)\right)-\bar{b}(X_r^{\epsilon}, \mathscr{L}(X_r^{\epsilon}|\mathscr{F}_r^0))\right]{\rm d}r\right|^2\\
:=&N\sum\limits_{i=0}^{N-2}L_i+NL_N,
\end{aligned}
\end{equation}
where $[\frac{t}{\Delta}]$ is the integer part of $\frac{t}{\Delta}$. By the H\"{o}lder inequality, Lemma \ref{lem2} and \eqref{barb}, we get
\begin{equation*}
\begin{aligned}
L_N=&\left|\int_{[\frac{t}{\Delta}]\Delta}^{t}\left[b\left(\frac{r}{\epsilon}, X_r^{\epsilon}, \mathscr{L}(X_r^{\epsilon}|\mathscr{F}_r^0)\right)-\bar{b}(X_r^{\epsilon}, \mathscr{L}(X_r^{\epsilon}|\mathscr{F}_r^0))\right]{\rm d}r\right|^2\\
\le&\left(t-[\frac{t}{\Delta}]\Delta\right)\int_{[\frac{t}{\Delta}]\Delta}^{t}\left|b\left(\frac{r}{\epsilon}, X_r^{\epsilon}, \mathscr{L}(X_r^{\epsilon}|\mathscr{F}_r^0)\right)-\bar{b}(X_r^{\epsilon}, \mathscr{L}(X_r^{\epsilon}|\mathscr{F}_r^0))\right|^2{\rm d}r\\
\le&2\left(t-[\frac{t}{\Delta}]\Delta\right)\int_{[\frac{t}{\Delta}]\Delta}^{t}\left(\left|b\left(\frac{r}{\epsilon}, X_r^{\epsilon}, \mathscr{L}(X_r^{\epsilon}|\mathscr{F}_r^0)\right)\right|^2+\left|\bar{b}(X_r^{\epsilon}, \mathscr{L}(X_r^{\epsilon}|\mathscr{F}_r^0))\right|^2\right){\rm d}r\\
\le&2\left(t-[\frac{t}{\Delta}]\Delta\right)\int_{[\frac{t}{\Delta}]\Delta}^{t}\left[\left(K_1\left(\frac{r}{\epsilon}\right)+K\right)\left(1+|X_r^{\epsilon}|^2+\mathcal{W}_2^2(\mathscr{L}(X_r^{\epsilon}|\mathscr{F}_r^0))\right)\right]{\rm d}r\\
\le&2\left(K_1\left(\frac{T}{\epsilon}\right)+K\right)\epsilon\left[1+\sup\limits_{0\le t\le T}|X_t^{\epsilon}|^2+\mathbb{E}\left(\sup\limits_{0\le t\le T}|X_t^{\epsilon}|^2\right)\right].
\end{aligned}
\end{equation*}
Then we use \eqref{xepsilon2} to get
\begin{equation}\label{ln}
\begin{aligned}
\mathbb{E}\left(\sup\limits_{0\le t\le T}L_N\right)\le&2(2K_{5,\epsilon}+1)\epsilon\left(K_1\left(\frac{T}{\epsilon}\right)+K\right).
\end{aligned}
\end{equation}
By Assumption \ref{ass1}, Assumption \ref{ass2}, Lemma \ref{wassesti} and \eqref{barb}, we have
\begin{equation*}
\begin{aligned}
L_i=&\left|\int_{t_i}^{t_{i+1}}\left[b\left(\frac{r}{\epsilon}, X_r^{\epsilon}, \mathscr{L}(X_r^{\epsilon}|\mathscr{F}_r^0)\right)-\bar{b}(X_r^{\epsilon}, \mathscr{L}(X_r^{\epsilon}|\mathscr{F}_r^0))\right]{\rm d}r\right|^2\\
\le&3\left|\int_{t_i}^{t_{i+1}}\left[b\left(\frac{r}{\epsilon}, X_r^{\epsilon}, \mathscr{L}(X_r^{\epsilon}|\mathscr{F}_r^0)\right)-b\left(\frac{r}{\epsilon}, X_{t_i}^{\epsilon}, \mathscr{L}(X_{t_i}^{\epsilon}|\mathscr{F}_{t_i}^0)\right)\right]{\rm d}r\right|^2\\
&+3\left|\int_{t_i}^{t_{i+1}}\left[b\left(\frac{r}{\epsilon}, X_{t_i}^{\epsilon}, \mathscr{L}(X_{t_i}^{\epsilon}|\mathscr{F}_{t_i}^0)\right)-\bar{b}\left(X_{t_i}^{\epsilon}, \mathscr{L}(X_{t_i}^{\epsilon}|\mathscr{F}_{t_i}^0)\right)\right]{\rm d}r\right|^2\\
&+3\left|\int_{t_i}^{t_{i+1}}\left[\bar{b}\left(X_{t_i}^{\epsilon}, \mathscr{L}(X_{t_i}^{\epsilon}|\mathscr{F}_{t_i}^0)\right)-\bar{b}(X_r^{\epsilon}, \mathscr{L}(X_r^{\epsilon}|\mathscr{F}_r^0))\right]{\rm d}r\right|^2\\
\le&3\sqrt{\epsilon} K_1\left(\frac{T}{\epsilon}\right)\int_{t_i}^{t_{i+1}}\left(|X_r^{\epsilon}-X_{t_i}^{\epsilon}|^2+\mathbb{W}^2_2(\mathscr{L}(X_r^{\epsilon}|\mathscr{F}_r^0), \mathscr{L}(X_{t_i}^{\epsilon}|\mathscr{F}_{t_i}^0))\right){\rm d}r\\
&+3\left|\epsilon\int_{t_i/\epsilon}^{t_{i+1}/\epsilon}\left[b\left(r, X_{t_i}^{\epsilon}, \mathscr{L}(X_{t_i}^{\epsilon}|\mathscr{F}_{t_i}^0)\right)-\bar{b}\left(X_{t_i}^{\epsilon}, \mathscr{L}(X_{t_i}^{\epsilon}|\mathscr{F}_{t_i}^0)\right)\right]{\rm d}r\right|^2\\
&+3\sqrt{\epsilon}K\int_{t_i}^{t_{i+1}}\left(|X_r^{\epsilon}-X_{t_i}^{\epsilon}|^2+\mathbb{W}^2_2(\mathscr{L}(X_r^{\epsilon}|\mathscr{F}_r^0), \mathscr{L}(X_{t_i}^{\epsilon}|\mathscr{F}_{t_i}^0))\right){\rm d}r\\
\le&3\sqrt{\epsilon} \left(K_1\left(\frac{T}{\epsilon}\right)+K\right)\int_{t_i}^{t_{i+1}}\left(|X_r^{\epsilon}-X_{t_i}^{\epsilon}|^2+\mathbb{E}|X_r^{\epsilon}-X_{t_i}^{\epsilon}|^2\right){\rm d}r\\
&+3\epsilon K_2\left(\frac{1}{\sqrt{\epsilon}}\right)\left[1+\sup\limits_{0\le t\le T}|X_{t}^{\epsilon}|^2+\mathbb{E}\left(\sup\limits_{0\le t\le T}|X_{t}^{\epsilon}|^2\right)\right].
\end{aligned}
\end{equation*}
By \eqref{xepsilon2} and \eqref{difepsilon}, we get
\begin{equation}\label{li}
\begin{aligned}
N\sum\limits_{i=0}^{N-2}\mathbb{E}\left(\sup\limits_{0\le t\le T}L_i\right)\le&3\epsilon N^2K_2\left(\frac{1}{\sqrt{\epsilon}}\right)(2K_{5,\epsilon}+1)\\
&+6C_6^{\epsilon}\epsilon N^2\left(K_1\left(\frac{T}{\epsilon}\right)+K\right)\left(\sqrt{\epsilon}+\epsilon+\epsilon^{H}\right).
\end{aligned}
\end{equation}
Taking \eqref{ln} and \eqref{li} into \eqref{bb}, combining with \eqref{difepsilon}, we see
\begin{equation*}
\begin{aligned}
&\mathbb{E}\left(\sup\limits_{0\le t\le T}\left|\int_0^t\left[b\left(\frac{r}{\epsilon}, X_r^{\epsilon}, \mathscr{L}(X_r^{\epsilon}|\mathscr{F}_r^0)\right)-\bar{b}(X_r^{\epsilon}, \mathscr{L}(X_r^{\epsilon}|\mathscr{F}_r^0))\right]{\rm d}r\right|^2\right)\\
\le&2(2K_{5,\epsilon}+1)\epsilon N\left(K_1\left(\frac{T}{\epsilon}\right)+K\right)+3\epsilon N^2K_2\left(\frac{1}{\sqrt{\epsilon}}\right)(2K_{5,\epsilon}+1)\\
&+6C_6^{\epsilon}\epsilon N^2\left(K_1\left(\frac{T}{\epsilon}\right)+K\right)\left(\sqrt{\epsilon}+\epsilon+\epsilon^{H}\right)\\
\le&2(2K_{5,\epsilon}+1)\sqrt{\epsilon}(T+\sqrt{\epsilon})\left(K_1\left(\frac{T}{\epsilon}\right)+K\right)+3(T+\sqrt{\epsilon})^2K_2\left(\frac{1}{\sqrt{\epsilon}}\right)(2K_{5,\epsilon}+1)\\
&+6C_6^{\epsilon}(T+\sqrt{\epsilon})^2\left(K_1\left(\frac{T}{\epsilon}\right)+K\right)\left(\sqrt{\epsilon}+\epsilon+\epsilon^{H}\right)\\
\rightarrow&0,\text {when} ~ ~ \epsilon\rightarrow0.
\end{aligned}
\end{equation*}
This completes the proof.
\end{proof}

\begin{lem}\label{diffwh}
{\rm Under Assumption \ref{ass2}, we have
\begin{equation}\label{diffw}
\begin{aligned}
\lim\limits_{\epsilon\rightarrow 0}\mathbb{E}\int_0^T\left\|\sigma_W\left(\frac{r}{\epsilon}, X_r^{\epsilon}, \mathscr{L}(X_r^{\epsilon}|\mathscr{F}_r^0)\right)-\bar{\sigma}_W(X_r^{\epsilon}, \mathscr{L}(X_r^{\epsilon}|\mathscr{F}_r^0))\right\|^2{\rm d}r=0,
\end{aligned}
\end{equation}
and
\begin{equation}\label{diffh}
\begin{aligned}
\lim\limits_{\epsilon\rightarrow 0}\mathbb{E}\int_0^T\left\|\sigma_H\left(\frac{r}{\epsilon}, \mathscr{L}(X_r^{\epsilon}|\mathscr{F}_r^0)\right)-\bar{\sigma}_H(\mathscr{L}(X_r^{\epsilon}|\mathscr{F}_r^0))\right\|^2{\rm d}r=0.
\end{aligned}
\end{equation}
}
\end{lem}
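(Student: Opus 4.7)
The plan is to adapt the interval-partition technique from the proof of Lemma \ref{diffb}, though the bookkeeping is slightly simpler here because the estimand is an integrated squared norm rather than the squared $L^\infty$ norm of a time integral: no extra factor of $N$ from a Cauchy-Schwarz step is needed. I set $\Delta=\sqrt{\epsilon}$, $t_k=k\Delta$ for $k=0,1,\dots,N-1$ and $t_N=T$, so that $N\le T/\sqrt{\epsilon}+1$ and $t_N-t_{N-1}\le\Delta$, and abbreviate $\mu_r^{\epsilon}:=\mathscr{L}(X_r^{\epsilon}|\mathscr{F}_r^0)$. On each subinterval $[t_i,t_{i+1}]$, the elementary inequality $|a+b+c|^2\le3(|a|^2+|b|^2+|c|^2)$ yields
\begin{align*}
\|\sigma_W(r/\epsilon, X_r^\epsilon, \mu_r^\epsilon) - \bar\sigma_W(X_r^\epsilon, \mu_r^\epsilon)\|^2
\le{}& 3\|\sigma_W(r/\epsilon, X_r^\epsilon, \mu_r^\epsilon) - \sigma_W(r/\epsilon, X_{t_i}^\epsilon, \mu_{t_i}^\epsilon)\|^2 \\
&+ 3\|\sigma_W(r/\epsilon, X_{t_i}^\epsilon, \mu_{t_i}^\epsilon) - \bar\sigma_W(X_{t_i}^\epsilon, \mu_{t_i}^\epsilon)\|^2 \\
&+ 3\|\bar\sigma_W(X_{t_i}^\epsilon, \mu_{t_i}^\epsilon) - \bar\sigma_W(X_r^\epsilon, \mu_r^\epsilon)\|^2.
\end{align*}
The first and third pieces are frozen-coefficient errors controlled by regularity, while the middle piece is a pure averaging term at the fixed argument $(X_{t_i}^\epsilon, \mu_{t_i}^\epsilon)$.

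For the two frozen-coefficient pieces, Assumption \ref{ass1} together with \eqref{barw} bounds them by $(K_1(r/\epsilon)+K)(|X_r^\epsilon - X_{t_i}^\epsilon|^2 + \mathbb{W}_2^2(\mu_r^\epsilon, \mu_{t_i}^\epsilon))$. After taking expectations and using Lemma \ref{wassesti} to replace the Wasserstein term by $\mathbb{E}|X_r^\epsilon - X_{t_i}^\epsilon|^2$, the time-regularity estimate \eqref{difepsilon} gives a bound of order $C_6^\epsilon(|r-t_i| + |r-t_i|^2 + |r-t_i|^{2H})$. Integrating $r$ over a subinterval of length $\Delta=\sqrt{\epsilon}$ and summing over $N\approx T/\sqrt{\epsilon}$ indices yields a total contribution of order $(K_1(T/\epsilon)+K)\,C_6^\epsilon\,T(\sqrt{\epsilon}+\epsilon+\epsilon^H)$, which vanishes as $\epsilon\to 0$ since $K_1$ is bounded (so $C_6^\epsilon$ inherits a uniform bound) and $H>1/2$.

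For the averaging piece, on each full subinterval I change variables $s=r/\epsilon$ and apply Assumption \ref{ass2} pointwise in $\omega$ with parameter $\Delta/\epsilon=1/\sqrt{\epsilon}$ and shift $t=t_i/\epsilon$, obtaining
\begin{equation*}
\int_{t_i}^{t_{i+1}} \|\sigma_W(r/\epsilon, X_{t_i}^\epsilon, \mu_{t_i}^\epsilon) - \bar\sigma_W(X_{t_i}^\epsilon, \mu_{t_i}^\epsilon)\|^2\, {\rm d}r \le \sqrt{\epsilon}\, K_2(1/\sqrt{\epsilon})\bigl(1 + |X_{t_i}^\epsilon|^2 + \mathcal{W}_2^2(\mu_{t_i}^\epsilon)\bigr).
\end{equation*}
Taking expectations via \eqref{conwass} and the moment bound \eqref{xepsilon2}, then summing over $i=0,\dots,N-2$, produces at most $T\,K_2(1/\sqrt{\epsilon})(1+2C_5^\epsilon)$, which tends to $0$ because $\lim_{T\to\infty}K_2(T)=0$. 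The residual interval $[t_{N-1},T]$, of length at most $\sqrt{\epsilon}$, is handled directly by the sublinear growth bounds from Lemma \ref{lem2} and \eqref{barw}, contributing $\mathcal{O}(\sqrt{\epsilon})$.

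The argument for \eqref{diffh} is identical in structure but slightly cleaner, since $\sigma_H$ and $\bar\sigma_H$ depend only on the conditional law, so the two frozen-coefficient pieces reduce to Wasserstein terms and are handled in the same way via Lemma \ref{wassesti} and \eqref{difepsilon}. The main obstacle is the calibration of the mesh size: $\Delta$ must be large enough relative to $\epsilon$ for $K_2(\Delta/\epsilon)$ to be small via Assumption \ref{ass2}, yet small enough relative to $T$ that the summed freezing errors still vanish. The standard compromise $\Delta=\sqrt{\epsilon}$ achieves both simultaneously.
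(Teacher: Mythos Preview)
Your proposal is correct and follows essentially the same approach as the paper's proof: the same mesh $\Delta=\sqrt{\epsilon}$, the same three-term decomposition on each subinterval (freeze at $t_i$, apply Assumption~\ref{ass2} after the change of variables $s=r/\epsilon$, unfreeze via \eqref{barw}), and the same appeals to Lemma~\ref{wassesti}, \eqref{difepsilon}, \eqref{conwass} and \eqref{xepsilon2}. Your explicit handling of the residual interval $[t_{N-1},T]$ and your remark that $C_5^\epsilon$, $C_6^\epsilon$ are uniform in $\epsilon$ (because $K_1$ is bounded) are slightly more careful than the paper's presentation, but there is no substantive difference.
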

\begin{proof}
We only show the first part, and the second part can be derived similarly. Set $\Delta=\sqrt{\epsilon}$. For $k=0,1,\cdots,N-1$, let $t_k=k\Delta$ and $t_N=T$ where $t_N-t_{N-1}\le\Delta.$ It is easy to see that $T\le N\Delta<T+\Delta$. Then we compute
\begin{equation*}\label{ww}
\begin{aligned}
&\mathbb{E}\int_0^T\left\|\sigma_W\left(\frac{r}{\epsilon}, X_r^{\epsilon}, \mathscr{L}(X_r^{\epsilon}|\mathscr{F}_r^0)\right)-\bar{\sigma}_W(X_r^{\epsilon}, \mathscr{L}(X_r^{\epsilon}|\mathscr{F}_r^0))\right\|^2{\rm d}r\\
=&\sum\limits_{i=0}^{N-1}\mathbb{E}\int_{t_i}^{t_{i+1}}\left\|\sigma_W\left(\frac{r}{\epsilon}, X_r^{\epsilon}, \mathscr{L}(X_r^{\epsilon}|\mathscr{F}_r^0)\right)-\bar{\sigma}_W(X_r^{\epsilon}, \mathscr{L}(X_r^{\epsilon}|\mathscr{F}_r^0))\right\|^2{\rm d}r\\
:=&\sum\limits_{i=0}^{N-1}\mathbb{E}L'_i,
\end{aligned}
\end{equation*}
By Assumption \ref{ass1}, Assumption \ref{ass2}, Lemma \ref{wassesti} and \eqref{barw}, we have
\begin{equation*}
\begin{aligned}
L'_i=&\int_{t_i}^{t_{i+1}}\left\|\sigma_W\left(\frac{r}{\epsilon}, X_r^{\epsilon}, \mathscr{L}(X_r^{\epsilon}|\mathscr{F}_r^0)\right)-\bar{\sigma}_W(X_r^{\epsilon}, \mathscr{L}(X_r^{\epsilon}|\mathscr{F}_r^0))\right\|^2{\rm d}r\\
\le&3\int_{t_i}^{t_{i+1}}\|\sigma_W\left(\frac{r}{\epsilon}, X_r^{\epsilon}, \mathscr{L}(X_r^{\epsilon}|\mathscr{F}_r^0)\right)-\sigma_W\left(\frac{r}{\epsilon}, X_{t_i}^{\epsilon}, \mathscr{L}(X_{t_i}^{\epsilon}|\mathscr{F}_{t_i}^0)\right)\|^2{\rm d}r\\
&+3\int_{t_i}^{t_{i+1}}\|\sigma_W\left(\frac{r}{\epsilon}, X_{t_i}^{\epsilon}, \mathscr{L}(X_{t_i}^{\epsilon}|\mathscr{F}_{t_i}^0)\right)-\bar{\sigma}_W\left(X_{t_i}^{\epsilon}, \mathscr{L}(X_{t_i}^{\epsilon}|\mathscr{F}_{t_i}^0)\right)\|^2{\rm d}r\\
&+3\int_{t_i}^{t_{i+1}}\|\bar{\sigma}_W\left(X_{t_i}^{\epsilon}, \mathscr{L}(X_{t_i}^{\epsilon}|\mathscr{F}_{t_i}^0)\right)-\bar{\sigma}_W(X_r^{\epsilon}, \mathscr{L}(X_r^{\epsilon}|\mathscr{F}_r^0))\|^2{\rm d}r\\
\le&3K_1\left(\frac{T}{\epsilon}\right)\int_{t_i}^{t_{i+1}}\left(|X_r^{\epsilon}-X_{t_i}^{\epsilon}|^2+\mathbb{W}^2_2(\mathscr{L}(X_r^{\epsilon}|\mathscr{F}_r^0), \mathscr{L}(X_{t_i}^{\epsilon}|\mathscr{F}_{t_i}^0))\right){\rm d}r\\
&+3\epsilon\int_{t_i/\epsilon}^{t_{i+1}/\epsilon}\|\sigma_W\left(r, X_{t_i}^{\epsilon}, \mathscr{L}(X_{t_i}^{\epsilon}|\mathscr{F}_{t_i}^0)\right)-\bar{\sigma}_W\left(X_{t_i}^{\epsilon}, \mathscr{L}(X_{t_i}^{\epsilon}|\mathscr{F}_{t_i}^0)\right)\|^2{\rm d}r\\
&+3K\int_{t_i}^{t_{i+1}}\left(|X_r^{\epsilon}-X_{t_i}^{\epsilon}|^2+\mathbb{W}^2_2(\mathscr{L}(X_r^{\epsilon}|\mathscr{F}_r^0), \mathscr{L}(X_{t_i}^{\epsilon}|\mathscr{F}_{t_i}^0))\right){\rm d}r\\
\le&3\left(K_1\left(\frac{T}{\epsilon}\right)+K\right)\int_{t_i}^{t_{i+1}}\left(|X_r^{\epsilon}-X_{t_i}^{\epsilon}|^2+\mathbb{E}|X_r^{\epsilon}-X_{t_i}^{\epsilon}|^2\right){\rm d}r\\
&+3\sqrt{\epsilon}K_2\left(\frac{1}{\sqrt{\epsilon}}\right)\left[1+\sup\limits_{0\le t\le T}|X_{t}^{\epsilon}|^2+\mathbb{E}\left(\sup\limits_{0\le t\le T}|X_{t}^{\epsilon}|^2\right)\right].
\end{aligned}
\end{equation*}
Thus,
\begin{equation*}
\begin{aligned}
&\mathbb{E}\int_0^T\left\|\sigma_W\left(\frac{r}{\epsilon}, X_r^{\epsilon}, \mathscr{L}(X_r^{\epsilon}|\mathscr{F}_r^0)\right)-\bar{\sigma}_W(X_r^{\epsilon}, \mathscr{L}(X_r^{\epsilon}|\mathscr{F}_r^0))\right\|^2{\rm d}r\\
\le&6 \left(K_1\left(\frac{T}{\epsilon}\right)+K\right)N\sqrt{\epsilon}C_6^{\epsilon}\left(\sqrt{\epsilon}+\epsilon+\epsilon^{H}\right)
+3N\sqrt{\epsilon}K_2\left(\frac{1}{\sqrt{\epsilon}}\right)(2K_{5,\epsilon}+1)\\
\le&6\left(K_1\left(\frac{T}{\epsilon}\right)+K\right)(T+\sqrt{\epsilon})C_6^{\epsilon}\left(\sqrt{\epsilon}+\epsilon+\epsilon^{H}\right)
+3(T+\sqrt{\epsilon})K_2\left(\frac{1}{\sqrt{\epsilon}}\right)(2K_{5,\epsilon}+1)\\
\rightarrow&0,\text {when} ~ ~ \epsilon\rightarrow0.
\end{aligned}
\end{equation*}
This completes the proof.
\end{proof}

\begin{thm}
{\rm Under Assumption \ref{ass1} and Assumption \ref{ass2}, we have
\begin{align*}
\lim\limits_{\epsilon\rightarrow0}\mathbb{E}\left(\sup\limits_{0\le t\le T}|X^{\epsilon}_t-\bar{X}_t|^2\right)=0.
\end{align*}
}
\end{thm}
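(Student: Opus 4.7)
The plan is to start from the integral form of $X^\epsilon_t-\bar X_t$, namely
\begin{align*}
X^\epsilon_t-\bar X_t=&\int_0^t\bigl[b(r/\epsilon,X^\epsilon_r,\mathscr{L}(X^\epsilon_r|\mathscr{F}_r^0))-\bar b(\bar X_r,\mathscr{L}(\bar X_r))\bigr]\,\mathrm{d}r\\
&+\int_0^t\bigl[\sigma_W(r/\epsilon,X^\epsilon_r,\mathscr{L}(X^\epsilon_r|\mathscr{F}_r^0))-\bar\sigma_W(\bar X_r,\mathscr{L}(\bar X_r))\bigr]\,\mathrm{d}W_r\\
&+\int_0^t\bigl[\sigma_H(r/\epsilon,\mathscr{L}(X^\epsilon_r|\mathscr{F}_r^0))-\bar\sigma_H(\mathscr{L}(\bar X_r))\bigr]\,\mathrm{d}B^H_r,
\end{align*}
and, inside each integrand, to add and subtract the averaged coefficient evaluated at $(X^\epsilon_r,\mathscr{L}(X^\epsilon_r|\mathscr{F}_r^0))$. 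This produces, for each of the three lines, an \textbf{averaging error} (difference between the coefficient at time $r/\epsilon$ and its average at the \emph{same} argument) and a \textbf{Lipschitz remainder} (difference of the averaged coefficient at $(X^\epsilon_r,\mathscr{L}(X^\epsilon_r|\mathscr{F}_r^0))$ versus $(\bar X_r,\mathscr{L}(\bar X_r))$).

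For the drift and the $W$-diffusion I would treat the Lipschitz remainders with the H\"older inequality and the Burkholder-Davis-Gundy inequality, invoking \eqref{barb} and \eqref{barw} to bound their expected supremum by $C\int_0^t\bigl[\mathbb{E}|X^\epsilon_r-\bar X_r|^2+\mathbb{E}\mathbb{W}_2^2(\mathscr{L}(X^\epsilon_r|\mathscr{F}_r^0),\mathscr{L}(\bar X_r))\bigr]\,\mathrm{d}r$; Lemma \ref{wassesti} collapses the Wasserstein term into $\mathbb{E}|X^\epsilon_r-\bar X_r|^2$. Their averaging errors are then dealt with directly by Lemma \ref{diffb} and by \eqref{diffw} (after passing to $\sup_{s\le t}$ via BDG for the stochastic integral, which converts the supremum into the time integral appearing in Lemma \ref{diffwh}).

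The fractional term is the delicate one, since BDG is unavailable for $B^H$. Here I would repeat the technique already used in \eqref{frac}: the stochastic Fubini argument with kernel $(s-u)^{-\kappa}(u-r)^{\kappa-1}$, the centered Gaussian variance bound, and the $L^p$--$L^q$ reduction yield
\begin{equation*}
\mathbb{E}\sup_{s\in[0,t]}\left|\int_0^s [\sigma_H(r/\epsilon,\mathscr{L}(X^\epsilon_r|\mathscr{F}_r^0))-\bar\sigma_H(\mathscr{L}(\bar X_r))]\,\mathrm{d}B^H_r\right|^2\le C_{\kappa,H}t^{2H-1}\mathbb{E}\int_0^t\|\cdot\|^2\,\mathrm{d}r.
\end{equation*}
Splitting $\sigma_H(r/\epsilon,\mathscr{L}(X^\epsilon_r|\mathscr{F}_r^0))-\bar\sigma_H(\mathscr{L}(\bar X_r))$ as above, the Lipschitz remainder is handled by \eqref{barh} plus Lemma \ref{wassesti}, producing an extra $\int_0^t\mathbb{E}|X^\epsilon_r-\bar X_r|^2\,\mathrm{d}r$ term, and the averaging error vanishes as $\epsilon\to0$ by \eqref{diffh}.

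Setting $\phi(t):=\mathbb{E}\sup_{s\in[0,t]}|X^\epsilon_s-\bar X_s|^2$, the above pieces combine to give an inequality of the form $\phi(t)\le A(\epsilon)+C_T\int_0^t\phi(s)\,\mathrm{d}s$ with $A(\epsilon)\to0$, and the Gronwall inequality produces $\phi(T)\le A(\epsilon)e^{C_TT}\to0$, which is the claim. The main obstacle, as anticipated, is the fractional part: one cannot push the supremum inside the stochastic integral via a martingale maximal inequality, so the entire argument is forced to route through the kernel-regularization identity \eqref{frac}; without that reduction the averaging error for $\sigma_H$ would not reduce to an object that Lemma \ref{diffwh} can annihilate.
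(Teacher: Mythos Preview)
Your proposal is correct and follows essentially the same route as the paper's own proof: the same add-and-subtract decomposition into averaging errors and Lipschitz remainders, the same use of \eqref{barb}--\eqref{barh} together with Lemma \ref{wassesti} for the remainders, the same reduction of the fractional integral via the kernel argument \eqref{frac}, and the same appeal to Lemmas \ref{diffb} and \ref{diffwh} before closing with Gronwall. The paper organizes these pieces as $K_1=K_{11}+K_{12}$, $K_2=K_{21}+K_{22}$, $K_3=K_{31}+K_{32}$, which is exactly your structure.
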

\begin{proof}
By \eqref{epsimv} and \eqref{average}, we compute
\begin{equation}\label{k}
\begin{aligned}
\mathbb{E}\left(\sup\limits_{0\le s\le t}|X^{\epsilon}_s-\bar{X}_s|^2\right)\le& 3\mathbb{E}\left(\sup\limits_{0\le s\le t}\left|\int_0^s\left[b\left(\frac{r}{\epsilon}, X_r^{\epsilon}, \mathscr{L}(X_r^{\epsilon}|\mathscr{F}_r^0)\right)-\bar{b}(\bar{X}_r, \mathscr{L}(\bar{X}_r))\right]{\rm d}r\right|^2\right)\\
&+3\mathbb{E}\left(\sup\limits_{0\le s\le t}\left|\int_0^s\left[\sigma_W\left(\frac{r}{\epsilon}, X_r^{\epsilon}, \mathscr{L}(X_r^{\epsilon}|\mathscr{F}_r^0)\right)-\bar{\sigma}_W(\bar{X}_r, \mathscr{L}(\bar{X}_r))\right]{\rm d}W_r\right|^2\right)\\
&+3\mathbb{E}\left(\sup\limits_{0\le s\le t}\left|\int_0^s\left[\sigma_H\left(\frac{r}{\epsilon}, \mathscr{L}(X_r^{\epsilon}|\mathscr{F}_r^0)\right)-\bar{\sigma}_H(\mathscr{L}(\bar{X}_r))\right]{\rm d}B_r^H\right|^2\right)\\
:=&3K_1+3K_2+3K_3.
\end{aligned}
\end{equation}
By the elementary inequality,
\begin{equation}\label{k1}
\begin{aligned}
K_1=&\mathbb{E}\left(\sup\limits_{0\le s\le t}\left|\int_0^s\left[b\left(\frac{r}{\epsilon}, X_r^{\epsilon}, \mathscr{L}(X_r^{\epsilon}|\mathscr{F}_r^0)\right)-\bar{b}(\bar{X}_r, \mathscr{L}(\bar{X}_r))\right]{\rm d}r\right|^2\right)\\
\le&2\mathbb{E}\left(\sup\limits_{0\le s\le t}\left|\int_0^s\left[b\left(\frac{r}{\epsilon}, X_r^{\epsilon}, \mathscr{L}(X_r^{\epsilon}|\mathscr{F}_r^0)\right)-\bar{b}(X_r^{\epsilon}, \mathscr{L}(X_r^{\epsilon}|\mathscr{F}_r^0))\right]{\rm d}r\right|^2\right)\\
&+2\mathbb{E}\left(\sup\limits_{0\le s\le t}\left|\int_0^s\left[\bar{b}(X_r^{\epsilon}, \mathscr{L}(X_r^{\epsilon}|\mathscr{F}_r^0))-\bar{b}(\bar{X}_r, \mathscr{L}(\bar{X}_r))\right]{\rm d}r\right|^2\right)\\
:=&K_{11}+K_{12}.
\end{aligned}
\end{equation}
By Assumption \ref{ass1}, Assumption \ref{ass2} and the H\"{o}lder inequality, we derive from \eqref{barb} and Lemma \ref{wassesti} that
\begin{equation}\label{k12}
\begin{aligned}
K_{12}=&2\mathbb{E}\left(\sup\limits_{0\le s\le t}\left|\int_0^s\left[\bar{b}(X_r^{\epsilon}, \mathscr{L}(X_r^{\epsilon}|\mathscr{F}_r^0))-\bar{b}(\bar{X}_r, \mathscr{L}(\bar{X}_r))\right]{\rm d}r\right|^2\right)\\
\le&2T\mathbb{E}\int_0^t|\bar{b}(X_r^{\epsilon}, \mathscr{L}(X_r^{\epsilon}))-\bar{b}(\bar{X}_r, \mathscr{L}(\bar{X}_r))|^2{\rm d}r\\
\le&2KT\mathbb{E}\int_0^t\left(|X_r^{\epsilon}-\bar{X}_r|^2+\mathbb{W}_2^2(\mathscr{L}(X_r^{\epsilon}|\mathscr{F}_r^0), \mathscr{L}(\bar{X}_r))\right){\rm d}r\\
\le&4KT\mathbb{E}\int_0^t|X_r^{\epsilon}-\bar{X}_r|^2{\rm d}r.
\end{aligned}
\end{equation}
By the Burkhold-Davis-Gundy inequality and the elementary inequality,
\begin{equation}\label{k2}
\begin{aligned}
K_2=&\mathbb{E}\left(\sup\limits_{0\le s\le t}\left|\int_0^s\left[\sigma_W\left(\frac{r}{\epsilon}, X_r^{\epsilon}, \mathscr{L}(X_r^{\epsilon}|\mathscr{F}_r^0)\right)-\bar{\sigma}_W(\bar{X}_r, \mathscr{L}(\bar{X}_r))\right]{\rm d}W_r\right|^2\right)\\
\le&4\mathbb{E}\int_0^t\left\|\sigma_W\left(\frac{r}{\epsilon}, X_r^{\epsilon}, \mathscr{L}(X_r^{\epsilon}|\mathscr{F}_r^0)\right)-\bar{\sigma}_W(\bar{X}_r, \mathscr{L}(\bar{X}_r))\right\|^2{\rm d}r\\
\le&8\mathbb{E}\int_0^t\left\|\sigma_W\left(\frac{r}{\epsilon}, X_r^{\epsilon}, \mathscr{L}(X_r^{\epsilon}|\mathscr{F}_r^0)\right)-\bar{\sigma}_W(X_r^{\epsilon}, \mathscr{L}(X_r^{\epsilon}|\mathscr{F}_r^0)\right\|^2{\rm d}r\\
&+8\mathbb{E}\int_0^t\left\|\bar{\sigma}_W(X_r^{\epsilon}, \mathscr{L}(X_r^{\epsilon}|\mathscr{F}_r^0)-\bar{\sigma}_W(\bar{X}_r, \mathscr{L}(\bar{X}_r))\right\|^2{\rm d}r\\
:=&K_{21}+K_{22}.
\end{aligned}
\end{equation}
By Assumption \ref{ass1} and Assumption \ref{ass2}, we derive from \eqref{barw} and Lemma \ref{wassesti} that
\begin{equation}\label{k22}
\begin{aligned}
K_{22}=&8\mathbb{E}\int_0^t\left\|\bar{\sigma}_W(X_r^{\epsilon}, \mathscr{L}(X_r^{\epsilon}|\mathscr{F}_r^0)-\bar{\sigma}_W(\bar{X}_r, \mathscr{L}(\bar{X}_r))\right\|^2{\rm d}r\\
\le&8K\mathbb{E}\int_0^t\left(|X_r^{\epsilon}-\bar{X}_r|^2+\mathbb{W}_2^2(\mathscr{L}(X_r^{\epsilon}|\mathscr{F}_r^0), \mathscr{L}(\bar{X}_r))\right){\rm d}r\\
\le&16K\mathbb{E}\int_0^t|X_r^{\epsilon}-\bar{X}_r|^2{\rm d}r.
\end{aligned}
\end{equation}
Again by the elementary inequality, similar to the derivation process of \eqref{frac}, we get
\begin{equation}\label{k3}
\begin{aligned}
K_3=&3\mathbb{E}\left(\sup\limits_{0\le s\le t}\left|\int_0^s\left[\sigma_H\left(\frac{r}{\epsilon}, \mathscr{L}(X_r^{\epsilon}|\mathscr{F}_r^0)\right)-\bar{\sigma}_H(\mathscr{L}(\bar{X}_r))\right]{\rm d}B_r^H\right|^2\right)\\
\le&C_{\kappa,H}t^{2H-1}\mathbb{E}\int_0^t\left\|\sigma_H\left(\frac{r}{\epsilon}, \mathscr{L}(X_r^{\epsilon}|\mathscr{F}_r^0)\right)-\bar{\sigma}_H(\mathscr{L}(\bar{X}_r))\right\|^2{\rm d}r\\
\le&C_{\kappa,H}t^{2H-1}\mathbb{E}\int_0^t\left\|\sigma_H\left(\frac{r}{\epsilon}, \mathscr{L}(X_r^{\epsilon}|\mathscr{F}_r^0)\right)-\bar{\sigma}_H(\mathscr{L}(X_r^{\epsilon}|\mathscr{F}_r^0))\right\|^2{\rm d}r\\
&+C_{\kappa,H}t^{2H-1}\mathbb{E}\int_0^t\left\|\bar{\sigma}_H(\mathscr{L}(X_r^{\epsilon}|\mathscr{F}_r^0))-\bar{\sigma}_H(\mathscr{L}(\bar{X}_r))\right\|^2{\rm d}r\\
:=&K_{31}+K_{32}.
\end{aligned}
\end{equation}
By Assumption \ref{ass1} and Assumption \ref{ass2}, we derive from \eqref{barh} and Lemma \ref{wassesti} that
\begin{equation}\label{k32}
\begin{aligned}
K_{32}=&C_{\kappa,H}t^{2H-1}\mathbb{E}\int_0^t\left\|\bar{\sigma}_H(\mathscr{L}(X_r^{\epsilon}|\mathscr{F}_r^0))-\bar{\sigma}_H(\mathscr{L}(\bar{X}_r))\right\|^2{\rm d}r\\
\le&C_{\kappa,H}t^{2H-1}\mathbb{E}\int_0^t\mathbb{W}_2^2(\mathscr{L}(X_r^{\epsilon}|\mathscr{F}_r^0), \mathscr{L}(\bar{X}_r)){\rm d}r\\
\le&C_{\kappa,H}t^{2H-1}\mathbb{E}\int_0^t|X_r^{\epsilon}-\bar{X}_r|^2{\rm d}r.
\end{aligned}
\end{equation}
Substituting \eqref{k1}-\eqref{k32} into \eqref{k} gives
\begin{equation*}
\begin{aligned}
&\mathbb{E}\left(\sup\limits_{0\le s\le t}|X^{\epsilon}_s-\bar{X}_s|^2\right)\\
\le& (4KT+16K+C_{\kappa,H}t^{2H-1})\mathbb{E}\int_0^t\mathbb{E}\left(\sup\limits_{0\le r\le s}|X_r^{\epsilon}-\bar{X}_r|^2\right){\rm d}s+3(K_{11}+K_{21}+K_{31}).
\end{aligned}
\end{equation*}
Since from Lemma \ref{diffb} and Lemma \ref{diffwh}, we have $\lim\limits_{\epsilon\rightarrow0}(K_{11}+K_{21}+K_{31})\rightarrow0$, thus the Gronwall inequality gives
\begin{equation*}
\begin{aligned}
\lim\limits_{\epsilon\rightarrow0}\mathbb{E}\left(\sup\limits_{0\le s\le t}|X^{\epsilon}_s-\bar{X}_s|^2\right)\rightarrow0.
\end{aligned}
\end{equation*}
This completes the proof.
\end{proof}



\begin{thebibliography}{50}
\bibitem{m96} McKean H.P., A class of Markov processes associated with nonlinear parabolic equations, Proc. Natl. Acad. Sci. USA 56: 1907-1911, 1996.


\bibitem{s91} Sznitman A.S., Topics in propagation of chaos, Notes in Math., 1464: 165-251, 1991.

\bibitem{mv20} Mishura Y., Veretennikov A.Y., Existence and uniqueness theorems for solutions of McKean-Vlasov stochastic equations, Theor. Probab. Math. Stat., 46867923, 2020.

\bibitem{ms19} Mehri S., Stannat W., Weak solutions to Vlasov-McKean equations under Lyapunov-type conditions, Stoch. Dynam., 19(06): 1950042, 2019.

\bibitem{r20} De Raynal P., Strong well posedness of McKean-Vlasov stochastic differential equations with H\"{o}lder drift, Stoc. Proc. Appl., 130(1): 79-107, 2020.

\bibitem{pr24} Pascucci A., Rondelli A., Veretennikov A.Y., Existence and uniqueness results for strongly degenerate McKean-Vlasov equations with rough coefficients, arXiv: 2409.14451, 2024.

\bibitem{hw19} Huang X., Wang F.Y., Distribution dependent SDEs with singular coefficients, Stoch. Proc. Appl., 129(11): 4747-4770, 2019.

\bibitem{ddm22} Dieye M., Diop A., Mckibben M.A., Existence of solutions for mean-field integrodifferential equations with delay, Stoch. Dyn., 22: 00093R2, 2022.

\bibitem{fhs22} Fan X., Huang X., Suo Y., et al., Distribution dependent SDEs driven by fractional Brownian motions, Stoc. Proc. Appl., 151: 23-67, 2022.

\bibitem{sxu22} Shen G., Xiang J., Wu J.L., Averaging principle for distribution dependent stochastic differential equations driven by fractional Brownian motion and standard Brownian motion, J. Differ. Equations, 321: 381-414, 2022.

\bibitem{ghm23} Galeati L., Harang F.A., Mayorcas A., Distribution dependent SDEs driven by additive fractional Brownian motion, Probab. Theor. Rel., 185: 251-309, 2023.

\bibitem{e21} Erny X., Locherbach E., Loukianova D., White-noise driven conditional McKean-Vlasov limits for systems of particles with simultaneous and random jumps, arXiv: 2103.04100.

\bibitem{sw21} Shao J., Wei D., Propagation of chaos and conditional McKean-Vlasov SDEs with regime-switching, Front. Math. China, DOI: s11464-021-0960-3, 2021.

\bibitem{stw24} Shao J., Tian T., Wang S., Conditional Mckean-Vlasov SDEs with jumps and Markovian regime-switching: Wellposedness, propagation of chaos, averaging priciple, J. Math. Anal. Appl., 534: 128080, 2024.

\bibitem{k68} Khasminskii R., On the principle of averaging the It\^{o} stochastic differential equations, Kibernetika, 4: 260-279, 1968.


\bibitem{gn08} Guerra J., Nualart D., Stochastic differential equations driven by fractional Brownian motion and standard Brownian motion, Stoch. Anal. Appl., 26: 1053-1075, 2008.

\bibitem{FHS22} Fan X., Huang X., Suo Y., Yuan C., Distribution dependent SDEs driven by fractional Brownian motions, Stoch. Proc. Appl., 151: 23-67, 2022.

\bibitem{DST19} Dos Reis G. , Salkeld W., Tugaut J., Freidlin-Wentzell LDP in path space for McKean-Vlasov equations and the functional iterated logarithm law, Ann. Appl. Probab. 29: 1487-1540, 2019.

\end{thebibliography}
\end{document}